\documentclass[12pt]{amsart}



\usepackage{amssymb}

\usepackage{enumitem}

\usepackage{graphicx}

\makeatletter
\@namedef{subjclassname@2020}{%
  \textup{2020} Mathematics Subject Classification}
\makeatother



\newtheorem{theorem}{Theorem}[section]

\newtheorem{lemma}[theorem]{Lemma}
\newtheorem{proposition}[theorem]{Proposition}



\theoremstyle{definition}
\newtheorem{definition}[theorem]{Definition}
\newtheorem{remark}[theorem]{Remark}



\numberwithin{equation}{section}

\frenchspacing

\textheight 22.5truecm \textwidth 14.5truecm
\setlength{\oddsidemargin}{0.35in}\setlength{\evensidemargin}{0.35in}

\topmargin=-0.5cm





\begin{document}

\baselineskip=17pt


\title[Boundedness on weighted flag and product Hardy spaces]{The complete boundedness of singular integrals on weighted flag and product Hardy spaces}

\author[J. Tan]{Jian Tan}
\address{School of Science\\ Nanjing University of Posts and Telecommunications\\
    Nanjing 210023, People's Republic of China}
\email{tj@njupt.edu.cn}

\date{}

\begin{abstract}
It is known that 
product singular integrals are bounded on product Hardy spaces and that
flag singular integrals are bounded on flag Hardy spaces. The purpose of this paper is to obtain the complete boundedness of singular integrals on weighted flag and product Hardy spaces. In particular, we prove the boundedness of one-parameter singular integrals on both weighted flag and product Hardy spaces, as well as the boundedness of flag singular integrals on weighted product Hardy spaces.

\end{abstract}

\subjclass[2020]{Primary 42B30, 42B25; Secondary 46E30}

\keywords{Weighted product Hardy spaces, weighted flag Hardy spaces, singular integrals, Littlewood--Paley theory, almost orthogonal estimates}

\maketitle

\section{Introduction and statement of main results}

It is well known that the classical singular integral operators
are bounded on the classical Hardy spaces.
On the other hand, the product singular integral operators, which are invariant under the following dilations:
$$
\delta x=(\delta_1x_1,\delta_2x_2,\cdots,\delta_nx_n),
$$
with $\delta_j>0$ and $1\le j\le n$,
are also bounded on corresponding product Hardy spaces.
For more details, see for example \cite{CF,F,FS,GS,HLLL} and the references therein.
A new extension of product theory came to light with the proof by
M\"uller et al. of the $L^p$ boundedness when $p\in(1,\infty)$
of Marcinkiewicz multipliers on the Heisenberg group in \cite{MRS}.
Then Han et al. introduced the multi-parameter flag Hardy spaces and gave the boundedness of flag singular integrals on these multi-parameter flag Hardy spaces in \cite{HLLW,HL,HLS}, which can be viewed as an ``intermediate'' between the classical Hardy spaces and the product Hardy spaces.
A natural question arises: is it possible that the one-parameter singular integrals are bounded on the flag Hardy spaces?

One of the main goals of this paper is to address this question, focusing on the two cases of $\mathbb R^n\times \mathbb R^m$ and $\mathbb R^n\times \mathbb R^m
\times \mathbb R^d$ with the flag structures, which also demonstrates that our methods are suitable for cases involving arbitrary multi-parameter flag structures.
In this paper, first we will consider the following one-parameter singular integral operators on $\mathbb R^n\times \mathbb R^m$.

\begin{definition}\label{1-2}
${K} \in C^2(\mathbb{R}^{n+m} \setminus \{0\})$ is said to be a {\it one-parameter singular integral kernel} on $\mathbb R^n\times \mathbb R^m$ if there are constants $C$ such that
\[
|\partial_z^\alpha {K}(z)| \leq \frac{C}{|z|^{n+m+|\alpha|}}
\]
for $0 \leq |\alpha| \leq 2$ and $z \in \mathbb{R}^{n+m} \setminus \{0\}$, and
\[
\int_{\epsilon < |z| < N} {K}(z) \, dz \leq C,
\]
whenever $0 < \epsilon < N < \infty$.
We say that an operator $T$ is a {\it one-parameter singular integral operator} if $Tf(z) = p. v. K\ast f(z)$ for a singular integral kernel ${K}$.
\end{definition}

For convenience, we defer the definitions of multi-parameter weighted flag and product Hardy spaces to Section \ref{pre}, along with Muckenhoupt weights in the multi-parameter settings.
Hereafter denote $\tau=\frac{2n}{2n+1}\vee\frac{2m}{2m+1},$ where $a\vee b$ means $\max\{a,b\}$. The first result in our paper is as follows. 
\begin{theorem}\label{1f}
Suppose that $w\in A_{p/\tau}(\mathbb R^n\times \mathbb R^m)$. If $T$ is a one-parameter singular integral operator on $\mathbb R^n\times \mathbb R^m$, then $T$ is bounded on multi-parameter weighted flag Hardy spaces $H^p_{flag,w}(\mathbb R^n\times \mathbb R^m)$ for $\tau<p\le1$.
\end{theorem}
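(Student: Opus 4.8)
The plan is to run the argument through the Littlewood--Paley square function characterization of $H^p_{flag,w}(\mathbb{R}^n\times\mathbb{R}^m)$ and to reduce the statement to a single family of almost orthogonality estimates between the flag Littlewood--Paley projections and the convolution operator $T$. Throughout, $f$ will be taken in a dense test class on which $Tf=\mathrm{p.v.}\,K\ast f$ makes sense; note that the hypotheses in Definition~\ref{1-2} force $\widehat{K}\in L^\infty(\mathbb{R}^{n+m})$, so $T$ is bounded on $L^2(\mathbb{R}^{n+m})$, which is what justifies the manipulations below.

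First I would invoke the machinery of Section~\ref{pre}: there is a family of flag Littlewood--Paley functions $\{\psi_I\}$ indexed by ``flag dyadic rectangles'' $I$ (each $I$ carrying a pair of scales $(j,k)$ subject to the flag constraint), together with companions $\{\widetilde\psi_I\}$, such that one has a discrete Calder\'on reproducing formula
\[
f=\sum_{I}|I|\,\psi_I\ast\widetilde\psi_I\ast f
\]
on the test class, and such that the flag square function
\[
S_{flag}f=\Big(\sum_{I}|\psi_I\ast f|^{2}\,\frac{\chi_I}{|I|}\Big)^{1/2}
\]
satisfies $\|g\|_{H^p_{flag,w}(\mathbb{R}^n\times\mathbb{R}^m)}\approx\|S_{flag}g\|_{L^p_w}$ for $w\in A_{p/\tau}(\mathbb{R}^n\times\mathbb{R}^m)$ and $\tau<p\le1$. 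With this in hand it suffices to prove $\|S_{flag}(Tf)\|_{L^p_w}\lesssim\|S_{flag}f\|_{L^p_w}$. Since $T$ is a convolution operator, inserting the reproducing formula gives, for each flag rectangle $J$ with scales $(j',k')$,
\[
\psi_J\ast Tf=\sum_{I}|I|\,(\psi_J\ast K\ast\psi_I)\ast(\widetilde\psi_I\ast f),
\]
so everything comes down to controlling the composed kernels $\psi_J\ast K\ast\psi_I$.

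The key step, and the one I expect to be the main obstacle, is an almost orthogonality estimate of the form
\[
\big|\psi_J\ast K\ast\psi_I(z)\big|\lesssim 2^{-\varepsilon|j-j'|}\,2^{-\varepsilon|k-k'|}\,\Phi_{I\wedge J}(z),
\]
where $\Phi_{I\wedge J}$ is an $L^1$-normalized bump adapted to the finer of the relevant scales and $\varepsilon>0$ is a fixed gain. The mechanism is the usual one: when a scale of $K$ is much coarser than the corresponding scale of $\psi_I$ or $\psi_J$ one plays the cancellation of the Littlewood--Paley function against the smoothness of $K$; in the reverse regime one uses the size and derivative bounds $|\partial^\alpha K(z)|\le C|z|^{-(n+m+|\alpha|)}$ for $|\alpha|\le 2$ together with the cancellation condition $\int_{\epsilon<|z|<N}K\le C$ to extract decay; and the flag constraint on admissible pairs of scales is precisely what allows a one-parameter (isotropic) kernel on $\mathbb{R}^{n+m}$ to interact favorably with the genuinely two-index flag grid. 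Concretely I would first split $K$ by a one-parameter Littlewood--Paley partition of unity on $\mathbb{R}^{n+m}$, record for each piece its bi-parameter size/smoothness/cancellation profile relative to the flag decomposition, and then recombine. The delicate point is that $K$ carries only one-parameter regularity and cancellation while the $\psi_I$ are genuine two-parameter objects, so matching them requires careful bookkeeping of which cancellations are available in which regime; in particular one must check that the $C^2$ hypothesis really yields a quantitative $\varepsilon>0$ compatible with the stated range $p>\tau$.

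Finally, granting the almost orthogonality estimate, I would sum. The bound above lets one dominate $|\psi_J\ast Tf|$, up to the geometric factors $2^{-\varepsilon|j-j'|}2^{-\varepsilon|k-k'|}$ and a summation over the (finitely overlapping after shifting) rectangles $I$ at each fixed relative scale, by shifted Hardy--Littlewood maximal averages of the coefficients $\widetilde\psi_I\ast f$. Choosing an auxiliary exponent $r$ with $\tau\le r<p$, so that $w\in A_{p/\tau}\subset A_{p/r}$, one then applies the weighted Fefferman--Stein vector-valued maximal inequality in the two-parameter (product) setting and sums the resulting geometric series in $|j-j'|$ and $|k-k'|$; this is exactly where the hypotheses $p>\tau$ and $w\in A_{p/\tau}$ are consumed. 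The outcome is $\|S_{flag}(Tf)\|_{L^p_w}\lesssim\|S_{flag}f\|_{L^p_w}\approx\|f\|_{H^p_{flag,w}(\mathbb{R}^n\times\mathbb{R}^m)}$, and passing from the dense test class to all of $H^p_{flag,w}$ by density completes the argument.
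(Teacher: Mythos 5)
Your outline matches the paper's proof in all essentials: discrete Calder\'on reproducing formula, an almost orthogonality estimate for $\phi_{j,k}\ast K\ast\phi_{j',k'}$, a maximal-function domination lemma, the weighted Fefferman--Stein vector-valued inequality, and density, with $p>\tau$ and $w\in A_{p/\tau}$ entering exactly where you say. The one precision worth noting is that the bump on the right-hand side of the paper's Lemma~\ref{orth2} is centered at the \emph{coarser} scales $2^{j\vee j'}$ and $2^{j\vee j'}\vee 2^{k\vee k'}$ (your notation $\Phi_{I\wedge J}$ and phrase ``finer of the relevant scales'' point the wrong way) and carries decay exponents $n+\frac{1}{2}$ and $m+\frac{1}{2}$ --- exactly what the $C^2$ hypothesis buys and exactly why $\tau=\frac{2n}{2n+1}\vee\frac{2m}{2m+1}$ is the threshold.
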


In \cite{Tanc}, C. Tan proved that one-parameter singular integral operators
are also bounded on the product Hardy spaces. In the following theorem we extend the result to
the weighted case. Denote $\tilde\tau=\frac{n}{n+1}\vee\frac{m}{m+1}.$

\begin{theorem}\label{1p}
Suppose that $w\in A_{p/{\tilde\tau}}(\mathbb R^n\times \mathbb R^m)$. If $T$ is a one-parameter singular integral operator on $\mathbb R^n\times \mathbb R^m$, then $T$ is bounded on weighted product Hardy spaces $H^p_{prod,w}(\mathbb R^n\times \mathbb R^m)$ for $\tilde\tau<p\le1$.
\end{theorem}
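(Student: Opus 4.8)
The argument follows the Littlewood--Paley and almost-orthogonality scheme, carried out in the weighted bi-parameter setting; it runs parallel to the proof of Theorem~\ref{1f}, the role of the flag decomposition being played by a genuine product Littlewood--Paley decomposition and that of $\tau$ by $\tilde\tau$. Fix a product Littlewood--Paley family $\{\Delta_j\otimes\Delta_k\}_{j,k\in\mathbb Z}$, where $\Delta_j$ is a convolution operator in the $\mathbb R^n$-variable at scale $2^{-j}$ and $\Delta_k$ a convolution operator in the $\mathbb R^m$-variable at scale $2^{-k}$, each with enough vanishing moments. By the weighted Littlewood--Paley characterization of $H^p_{prod,w}(\mathbb R^n\times\mathbb R^m)$ recalled in Section~\ref{pre}, it suffices to prove
\[
\Big\|\Big(\sum_{j,k}|(\Delta_j\otimes\Delta_k)\,Tf|^2\Big)^{1/2}\Big\|_{L^p_w}\ \lesssim\ \Big\|\Big(\sum_{j,k}|(\Delta_j\otimes\Delta_k)f|^2\Big)^{1/2}\Big\|_{L^p_w}.
\]
Since the one-parameter kernel conditions imply, by classical Calder\'on--Zygmund theory on $\mathbb R^{n+m}$, that $T$ is bounded on $L^2(\mathbb R^{n+m})$, the discrete product Calder\'on reproducing formula $f=\sum_{j',k'}(\widetilde\Delta_{j'}\otimes\widetilde\Delta_{k'})(\Delta_{j'}\otimes\Delta_{k'}f)$ converges and gives
\[
(\Delta_j\otimes\Delta_k)Tf=\sum_{j',k'}R_{j,k}^{\,j',k'}\big(\Delta_{j'}\otimes\Delta_{k'}f\big),\qquad R_{j,k}^{\,j',k'}:=(\Delta_j\otimes\Delta_k)\,T\,(\widetilde\Delta_{j'}\otimes\widetilde\Delta_{k'}).
\]
One proves the estimate first for $f$ in a dense subclass (Schwartz functions on $\mathbb R^{n+m}$ with sufficient cancellation, or finite sums of product bumps) and extends by density.

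The heart of the matter is to show that the convolution kernel $R_{j,k}^{\,j',k'}(u,v)$, $u\in\mathbb R^n$, $v\in\mathbb R^m$, satisfies the double almost-orthogonality bound
\[
\big|R_{j,k}^{\,j',k'}(u,v)\big|\ \lesssim\ 2^{-\varepsilon|j-j'|}\,2^{-\varepsilon|k-k'|}\;\frac{2^{-(j\wedge j')n}}{\big(2^{-(j\wedge j')}+|u|\big)^{\,n+\varepsilon}}\;\frac{2^{-(k\wedge k')m}}{\big(2^{-(k\wedge k')}+|v|\big)^{\,m+\varepsilon}}
\]
for every $0<\varepsilon<1$, together with the corresponding smoothness estimates in $(u,v)$ (which are what the square-function argument actually consumes). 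Writing $R_{j,k}^{\,j',k'}=(\psi_j\otimes\psi_k)\ast K\ast(\tilde\psi_{j'}\otimes\tilde\psi_{k'})$ and using the vanishing moments of the bumps at the smaller scale, one reduces to Taylor-expanding $K$ in the appropriate variable; the crucial observation is that the one-parameter Calder\'on--Zygmund bounds $|\partial_z^\alpha K(z)|\lesssim|z|^{-(n+m+|\alpha|)}$ for $|\alpha|\le 2$ are \emph{at least as strong} as the corresponding product-type bounds when $|u|$ and $|v|$ are comparable, while in the unbalanced regimes $|u|\ll|v|$ or $|u|\gg|v|$ the single homogeneity of $K$ still converts the decay gained in the favoured variable into the required mixed product decay. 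Carrying this out needs a somewhat delicate case analysis in the parameters $2^{-j},2^{-k},2^{-j'},2^{-k'}$ and the sizes of $|u|,|v|$, and this is the step I expect to be the principal obstacle: it is exactly where the proof departs from the classical product Calder\'on--Zygmund theory. In the unweighted case these kernel estimates are those of C.\ Tan \cite{Tanc}, and they can be reused here essentially verbatim.

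Granting these estimates, the matrix $\{R_{j,k}^{\,j',k'}\}$ is almost diagonal with geometric off-diagonal decay in both parameters, so standard almost orthogonality --- a Plancherel--P\'olya-type inequality followed by the Fefferman--Stein vector-valued maximal inequality for the strong (bi-parameter) Hardy--Littlewood maximal operator $M_s$ --- yields, for $\tilde\tau<p\le1$,
\[
\Big\|\Big(\sum_{j,k}|(\Delta_j\otimes\Delta_k)Tf|^2\Big)^{1/2}\Big\|_{L^p_w}\ \lesssim\ \Big\|\Big(\sum_{j',k'}\big[M_s\big(|\Delta_{j'}\otimes\Delta_{k'}f|^{r}\big)\big]^{2/r}\Big)^{1/2}\Big\|_{L^p_w},
\]
where $r$ is chosen with $\tilde\tau<r<p$ and $\varepsilon<1$ is chosen with $\tfrac{n}{n+\varepsilon}<r$ and $\tfrac{m}{m+\varepsilon}<r$ (possible precisely because $p>\tilde\tau=\tfrac{n}{n+1}\vee\tfrac{m}{m+1}$, which is what pins down the threshold). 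Since $w\in A_{p/\tilde\tau}(\mathbb R^n\times\mathbb R^m)\subset A_{p/r}(\mathbb R^n\times\mathbb R^m)$, the weighted Fefferman--Stein inequality on $L^{p/r}_w(\ell^{2/r})$ applies and the right-hand side is $\lesssim\|(\sum_{j',k'}|\Delta_{j'}\otimes\Delta_{k'}f|^2)^{1/2}\|_{L^p_w}$; this is the point at which the weighted statement genuinely uses more than the unweighted one of \cite{Tanc}. Combining the three steps proves Theorem~\ref{1p}.
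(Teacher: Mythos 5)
Your scheme---reduce to a discrete Littlewood--Paley square-function bound, prove product-type almost-orthogonality for the one-parameter kernel, and close with the weighted Fefferman--Stein vector-valued maximal inequality for the strong maximal function---is exactly the paper's (the paper's Lemma~\ref{ortho3} supplies in detail the kernel estimate you rightly flag as the crux, and the two-parameter case is treated in the paper as a simplification of the three-parameter Theorem~\ref{1p-3}). One slip worth fixing: the inclusion $A_{p/\tilde\tau}\subset A_{p/r}$ that you invoke with $r>\tilde\tau$ is backwards, since $r>\tilde\tau$ forces $p/r<p/\tilde\tau$ and the $A_q$ classes are increasing in $q$; the correct step is the self-improvement (openness) property of Muckenhoupt weights, which for $w\in A_{p/\tilde\tau}$ produces some $r$ with $\tilde\tau<r\le1$ and $w\in A_{p/r}$, as the paper states.
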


\begin{remark}\label{r1}
If the smoothness conditions of $K$ are replaced by ${K} \in C^\infty(\mathbb{R}^{n+m} \setminus \{0\})$, 
\[
|\partial_z^\alpha {K}(z)| \leq \frac{C}{|z|^{n+m+|\alpha|}}
\]
for any $0 \leq |\alpha| <\infty$ and
$
\int_{\epsilon < |z| < N} \partial_z^\beta{K}(z) \, dz \leq C
$
for any $0 \leq |\beta| <\infty$ and $0 < \epsilon < N < \infty$,
then by using the extrapolation we can get that for any $w\in A_{\infty}(\mathbb R^n\times \mathbb R^m)$, the one-parameter singular integral operator $\mathcal T$ is bounded on $H^p_{flag,w}(\mathbb R^n\times \mathbb R^m)$ and $H^p_{prod,w}(\mathbb R^n\times \mathbb R^m)$ for $0<p<\infty$.
\end{remark}

We recall the definition of flag singular integrals in \cite{MRS}.

\begin{definition}\label{flag}
A \emph{flag kernel} on $\mathbb{R}^n \times \mathbb{R}^m$ is a distribution on $\mathbb{R}^{n+m}$ which coincides with a $C^{\infty}$ function away from the coordinate subspace $\{(0, y)\} \subset \mathbb{R}^{n+m}$, where $0 \in \mathbb{R}^n$ and $y \in \mathbb{R}^m$, and satisfies the following conditions:

\begin{itemize}
    \item[(1)] (Differential Inequalities) For any multi-indices $\alpha = (\alpha_1, \cdots, \alpha_n)$ and $\beta = (\beta_1, \cdots, \beta_m)$, we have
    \[
    |\partial_x^{\alpha} \partial_y^{\beta} K(x, y)| \leq C_{\alpha, \beta} |x|^{-n - |\alpha|} \cdot (|x| + |y|)^{-m - |\beta|}
    \]
    for all $(x, y) \in \mathbb{R}^n \times \mathbb{R}^m$ with $|x| \neq 0$.

    \item[(2)] (Cancellation Condition)
    \[
    \left| \int_{\mathbb{R}^m} \partial_x^{\alpha} K(x, y) \phi_1(\delta y) \, dy \right| \leq C_{\alpha} |x|^{-n - |\alpha|}
    \]
    for every multi-index $\alpha$, every normalized bump function $\phi_1$ on $\mathbb{R}^m$, and every $\delta > 0$;

    \[
    \left| \int_{\mathbb{R}^n} \partial_y^{\beta} K(x, y) \phi_2(\delta x) \, dx \right| \leq C_{\beta} |y|^{-m - |\beta|}
    \]
    for every multi-index $\beta$, every normalized bump function $\phi_2$ on $\mathbb{R}^n$, and every $\delta > 0$;

    \[
    \left| \int_{\mathbb{R}^{n+m}} K(x, y) \phi_3(\delta_1 x, \delta_2 y) \, dx \, dy \right| \leq C
    \]
    for every normalized bump function $\phi_3$ on $\mathbb{R}^{n+m}$, and every $\delta_1 > 0$ and $\delta_2 > 0$.
\end{itemize}

Then $T_{flag}(f)(x, y)=K\ast f(x, y)$ is said to be a {\it flag singular integral} on $\mathbb R^n\times \mathbb R^m$.
\end{definition}

In \cite{HL}, Han and Lu obtained the flag singular integrals $T_{flag}$ are bounded on the 
flag Hardy spaces $H^p_{flag}(\mathbb R^n\times \mathbb R^m).$  
Thus, a natural question arises: are the flag singular integrals bounded on weighted product Hardy spaces on $\mathbb R^n\times \mathbb R^m$?
The another purpose of this paper is to answer in the affirmative by obtaining the boundedness of such operators on these spaces as follows.

\begin{theorem}\label{fp}
Suppose that $w\in A_{\infty}(\mathbb R^n\times \mathbb R^m)$. If $T_{flag}$ is a a flag singular integral on $\mathbb R^n\times \mathbb R^m$, then $T_{flag}$ is bounded on weighted product Hardy spaces $H^p_{prod,w}(\mathbb R^n\times \mathbb R^m)$ for $0<p\le1$.
\end{theorem}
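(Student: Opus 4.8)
The plan is to prove Theorem \ref{fp} by a Littlewood--Paley / discrete Calder\'on reproducing formula argument adapted to the two competing scales present here: the \emph{flag} structure of the operator $T_{flag}$ and the \emph{pure product} structure of the target space $H^p_{prod,w}(\mathbb R^n\times\mathbb R^m)$. First I would recall the discrete Calder\'on reproducing formula for the weighted product Hardy space, writing $f=\sum_{I,J}|I||J|\,\psi_{I,J}\,\langle\psi_{I,J},f\rangle$ in an appropriate norm, where $\{\psi_{I,J}\}$ is a product wavelet-type family with $I\subset\mathbb R^n$, $J\subset\mathbb R^m$ dyadic cubes, together with the characterization of $\|f\|_{H^p_{prod,w}}$ by the discrete square function $g(f)=\big(\sum_{I,J}(|\langle\psi_{I,J},f\rangle|\,|I|^{-1/2}|J|^{-1/2}\chi_I\chi_J)^2\big)^{1/2}$. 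The goal is then to show that the matrix coefficients $\langle\psi_{I,J},T_{flag}\psi_{I',J'}\rangle$ satisfy an almost-orthogonality estimate with enough decay in $\big(\log(\ell(I)/\ell(I')),\log(\ell(J)/\ell(J'))\big)$ and in spatial separation to run a weighted vector-valued Fefferman--Stein / almost-orthogonality bound and conclude $\|g(T_{flag}f)\|_{L^p_w}\lesssim\|g(f)\|_{L^p_w}$, hence $\|T_{flag}f\|_{H^p_{prod,w}}\lesssim\|f\|_{H^p_{prod,w}}$; finally a density argument extends this from a dense subclass to all of $H^p_{prod,w}$.

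The key technical step is the kernel estimate on $\langle\psi_{I,J},T_{flag}\psi_{I',J'}\rangle$. Here I would exploit the representation of a flag kernel $K$ on $\mathbb R^n\times\mathbb R^m$ as a ``projection'' of a product kernel on $\mathbb R^n\times\mathbb R^m\times\mathbb R^m$: there is $K^\natural(x,y,t)$, a product kernel in the $(x,y)\leftrightarrow$ first two groups and $t\leftrightarrow$ third group sense (actually a product kernel adapted to the flag, as in M\"uller--Ricci--Stein and Han--Lu), with $K(x,y)=\int_{\mathbb R^m}K^\natural(x,y-t,t)\,dt$. Lifting $T_{flag}$ to this auxiliary three-parameter-type setting lets one apply genuine two-parameter product almost orthogonality in the $x$ and $(y,t)$ variables and then integrate out $t$; the $t$-integration is harmless because the lifted kernel has the right cancellation in the third variable. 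Concretely I would test $T_{flag}$ against the product bump functions $\psi_I^{(1)}(x)\psi_J^{(2)}(y)$, split the $y$-frequency region $\ell(J)$ relative to $\ell(I)$ into the two ranges $\ell(J)\le\ell(I)$ and $\ell(J)>\ell(I)$ (the flag geometry treats these asymmetrically, the bound $(|x|+|y|)^{-m-|\beta|}$ being effectively the smaller scale), and in each range use the cancellation conditions (1), (2) of Definition \ref{flag} on the appropriate variable to gain the needed smoothness/decay factors $2^{-|j-j'|\varepsilon}2^{-|k-k'|\varepsilon}$ plus polynomial spatial decay.

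The main obstacle I expect is precisely this asymmetry: unlike the proof that $T_{flag}$ is bounded on $H^p_{flag}$, where the space ``sees'' the flag scales and the analysis is scale-matched, here the target space is the full product Hardy space, which is \emph{smaller} than the flag space, so one must show the a priori weaker-looking flag cancellation still produces a genuine product (double) square-function bound. Controlling the region $\ell(J)\sim\ell(I)$, where the flag and product pictures interact, requires simultaneously using the $x$-derivative bounds, the $y$-derivative bounds and the mixed cancellation $\int_{\mathbb R^n}\partial_y^\beta K\,\phi_2(\delta x)\,dx$ estimate, and summing the resulting geometric series against the weighted square-function norm without losing the endpoint $p\le 1$. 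To handle the weight I would invoke that $w\in A_\infty(\mathbb R^n\times\mathbb R^m)$ gives a finite critical index $q_w$ and, via the reverse H\"older and the weighted Fefferman--Stein vector-valued maximal inequality in the product setting, absorb the polynomial spatial decay into a weighted $\ell^2$-valued bound; by choosing the smoothness order of the test family large enough relative to $q_w$ and to $1/p$, the almost-orthogonality series converges. Once the single decay estimate
\[
|\langle\psi_{I,J},T_{flag}\psi_{I',J'}\rangle|\lesssim 2^{-|j-j'|\varepsilon}2^{-|k-k'|\varepsilon}\frac{|I|^{1/2}|I'|^{1/2}}{(\ell(I)\vee\ell(I'))^{?}}\cdots
\]
(with the usual product Poisson-type spatial factors) is in hand, the remainder of the argument is the standard weighted product Hardy space machinery, so I regard the kernel estimate near $\ell(I)\sim\ell(J)$ as the crux.
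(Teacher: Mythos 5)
Your proposal shares the high-level machinery (discrete Calder\'on reproducing formula, almost-orthogonality of the matrix coefficients $\langle\psi_{I,J},T_{flag}\psi_{I',J'}\rangle$, weighted Fefferman--Stein vector-valued maximal inequality), and that part is correct. But you take a genuinely different and more laborious route to the key kernel estimate, and in doing so you manufacture a difficulty that the paper simply dissolves. You propose either lifting $K$ to a kernel $K^\natural$ on $\mathbb R^n\times\mathbb R^m\times\mathbb R^m$ with $K(x,y)=\int K^\natural(x,y-t,t)\,dt$, or a direct estimate split over the cases $\ell(J)\lessgtr\ell(I)$ with the ``region $\ell(I)\sim\ell(J)$'' singled out as the crux. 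The paper instead invokes the observation (due to Nagel--Ricci--Stein \cite{NRS}) that a flag kernel on $\mathbb R^n\times\mathbb R^m$ is already a \emph{product} kernel on $\mathbb R^n\times\mathbb R^m$: the differential inequality $|\partial_x^\alpha\partial_y^\beta K(x,y)|\lesssim |x|^{-n-|\alpha|}(|x|+|y|)^{-m-|\beta|}$ trivially implies the product bound $|x|^{-n-|\alpha|}|y|^{-m-|\beta|}$, and likewise the flag cancellations imply the product cancellations. With that inclusion, Theorem~\ref{fp} becomes an immediate corollary of the boundedness of genuine two-parameter product singular integrals on $H^p_{prod,w}$, which the paper proves by the iterated one-parameter almost-orthogonality estimate (Proposition~\ref{ortho4} in the three-parameter case) and the Section~4 machinery. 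In particular the asymmetry you worry about at the scale $\ell(I)\sim\ell(J)$ never enters: the flag bound is pointwise \emph{stronger} than the product bound in every regime, so no case analysis is needed.

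A secondary concern with your lifting alternative: the MRS/Han--Lu lift $K^\natural(x,y,t)$ on $\mathbb R^n\times\mathbb R^m\times\mathbb R^m$, followed by the projection $(y,t)\mapsto y+t$, is the natural mechanism for producing the \emph{flag} square function on $\mathbb R^n\times\mathbb R^m$, not the product square function. To recover a genuine product $L^p_w$ estimate for the two-index square function on $\mathbb R^n\times\mathbb R^m$ from an estimate in the lifted variables you would still have to argue why integrating out $t$ lands you in $H^p_{prod,w}$ rather than $H^p_{flag,w}$; since $H^p_{prod}$ embeds strictly into $H^p_{flag}$, this is exactly the direction in which you lose information, and your sketch does not address how that loss is avoided. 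The paper's class-inclusion observation sidesteps the issue entirely, so I would recommend replacing the lifting step by the Nagel--Ricci--Stein inclusion and then running the (already correct) remainder of your argument verbatim.
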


We next study the weighted flag and product Hardy spaces boundedness of one-parameter singular integral operators on $\mathbb R^n\times \mathbb R^m\times \mathbb R^d$ which is nearly identical to Definition \ref{1-2}. For completeness, we give the definition of one-parameter singular integral operators on $\mathbb R^n\times \mathbb R^m\times \mathbb R^d$.

\begin{definition}\label{1-3}
${K} \in C^3(\mathbb{R}^{n+m+d} \setminus \{0\})$ is said to be a {\it one-parameter singular integral kernel} on $\mathbb R^n\times \mathbb R^m\times \mathbb R^d$ if there are constants $C$ such that
\[
|\partial_z^\alpha {K}(z)| \leq \frac{C}{|z|^{n+m+d+|\alpha|}}
\]
for $0 \leq |\alpha| \leq 3$ and $z \in \mathbb{R}^{n+m+d} \setminus \{0\}$, and
\[
\int_{\epsilon < |z| < N} {K}(z) \, dz \leq C,
\]
whenever $0 < \epsilon < N < \infty$.
We say that an operator $T$ is a {\it one-parameter singular integral operator} if $Tf(z) = p.v.{K}\ast f(z)$ for a singular integral kernel ${K}$.
\end{definition}

Hereafter denote $\gamma=\frac{3n}{3n+1}\vee\frac{3m}{3m+1}\vee \frac{3d}{3d+1}$. 
Now we state the following theorem.
\begin{theorem}\label{1f-3}
Suppose that $w\in A_{p/\gamma}(\mathbb R^n\times \mathbb R^m\times \mathbb R^d)$. If $T$ is a one-parameter singular integral operator on $\mathbb R^n\times \mathbb R^m\times \mathbb R^d$, then $T$ is bounded on multi-parameter weighted flag Hardy spaces $H^p_{flag,w}(\mathbb R^n\times \mathbb R^m\times \mathbb R^d)$ for $\gamma<p\le1$.
\end{theorem}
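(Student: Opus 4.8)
The plan is to follow the same Littlewood--Paley / almost-orthogonality strategy used for Theorem \ref{1f}, but now adapted to the three-parameter flag structure on $\mathbb R^n\times\mathbb R^m\times\mathbb R^d$. First I would fix a Calder\'on-type reproducing formula adapted to the flag setting: one builds test functions $\psi^{(1)}_{j_1},\psi^{(2)}_{j_2},\psi^{(3)}_{j_3}$ on $\mathbb R^n$, $\mathbb R^m$, $\mathbb R^d$ respectively and forms the flag convolution kernels $\psi_{j_1,j_2,j_3}$ obtained by the usual ``flag'' composition (convolving the $\mathbb R^m$-variable piece against the $\mathbb R^d$-variable piece, etc.), so that $f=\sum_{j_1,j_2,j_3}\psi_{j_1,j_2,j_3}\ast\psi_{j_1,j_2,j_3}\ast f$ with convergence in the appropriate space. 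The weighted flag Hardy norm $\|f\|_{H^p_{flag,w}}$ is then equivalent to the $L^p_w$-norm of the associated discrete square function $g_{flag}(f)$, and this equivalence (stated in Section \ref{pre}) is what I would invoke to reduce matters to an estimate on square functions.

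Next I would estimate, for each fixed scale, the kernel of $\psi_{j_1,j_2,j_3}\ast T$ (or $T$ applied to the reproducing pieces). Writing $T$ as convolution with the one-parameter kernel $K$ satisfying the $C^3$ bounds of Definition \ref{1-3}, the core is an almost-orthogonality estimate: for any two multi-scales $J=(j_1,j_2,j_3)$ and $J'=(j_1',j_2',j_3')$ one must show that the composed kernel $\psi_J\ast T\ast\psi_{J'}$ still satisfies flag-type size, smoothness and cancellation estimates with a gain $\prod 2^{-|j_i-j_i'|\varepsilon}$ for some $\varepsilon>0$. This is exactly where the three derivatives assumed on $K$ and the three cancellation/smoothness orders of the flag test functions get consumed; the mechanism is the standard one (Taylor expansion against the cancellation of one factor, size estimates otherwise, carried out one parameter at a time because the flag kernels are composed one parameter at a time), but bookkeeping the mixed flag decay $(|x|+|y|)$-type weights across all three blocks is heavier than in the two-parameter case. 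I expect this almost-orthogonality lemma to be the main obstacle: one has to track how a one-parameter bound on $\mathbb R^{n+m+d}$ interacts with the genuinely three-parameter flag reproducing formula, and verify that the resulting kernels lie in the correct flag class uniformly.

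With the almost-orthogonality estimate in hand, I would run the standard Littlewood--Paley argument: apply the reproducing formula to $Tf$, use the composed-kernel estimates together with the a.o. gain to dominate the square function of $Tf$ by (a shifted, summable average of) the square function of $f$, and then sum the geometric series in $|j_i-j_i'|$. The weight $w\in A_{p/\gamma}(\mathbb R^n\times\mathbb R^m\times\mathbb R^d)$ enters through the weighted Fefferman--Stein vector-valued maximal inequality in the three-parameter product setting: since $p/\gamma>1$, the strong maximal function (iterated Hardy--Littlewood maximal operator in the three groups of variables) is bounded on $L^{p/\gamma}_w$, and a Journ\'e-type / square-function trick converts the pointwise kernel bounds into control by $M_s$ applied to $\big(\sum_J |\psi_J\ast f|^2 \chi_{Q_J}\big)^{1/2}$ raised to the power $\gamma$. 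Combining these yields $\|g_{flag}(Tf)\|_{L^p_w}\lesssim\|g_{flag}(f)\|_{L^p_w}\simeq\|f\|_{H^p_{flag,w}}$, first for $f$ in a dense subspace and then for all $f$ by the usual density and duality argument, which proves Theorem \ref{1f-3}. The choice $\gamma=\frac{3n}{3n+1}\vee\frac{3m}{3m+1}\vee\frac{3d}{3d+1}$ is precisely what makes $p/\gamma>1$ available exactly when the $C^3$-smoothness budget suffices for the a.o. gain in each parameter, mirroring the role of $\tau$ in Theorem \ref{1f}.
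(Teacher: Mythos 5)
Your proposal follows essentially the same route the paper takes: a discrete flag Calder\'on reproducing formula, a flag-type almost-orthogonality estimate for the composed kernels $\phi_J\ast K\ast\phi_{J'}$ with geometric gain in $|j_i-j_i'|$ and flag decay weights to the powers $n+1/3$, $m+1/3$, $d+1/3$, and then a reduction via H\"older and the weighted Fefferman--Stein vector-valued maximal inequality on $L^{p/r}_w$ for some $r$ with $\gamma<r\le 1$, closed by density of $L^2\cap H^p_{flag,w}$. Your identification of $\gamma=\frac{3n}{3n+1}\vee\frac{3m}{3m+1}\vee\frac{3d}{3d+1}$ as the exponent forced by the $C^3$ regularity budget, the $1/3$-splitting of the gain across parameters, and the use of the strong maximal function matches the paper's argument, which reduces Theorem \ref{1f-3} to the two-parameter case via the stated almost-orthogonality lemma.
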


The boundedness of one-parameter singular integral operator is also established on
weighted product Hardy spaces $H^p_{prod,w}(\mathbb R^n\times \mathbb R^m\times \mathbb R^d)$ as follows. Denote $\tilde \gamma=\frac{n}{n+1}\vee\frac{m}{m+1}\vee \frac{d}{d+1}$.

\begin{theorem}\label{1p-3}
Suppose that $w\in A_{p/\tilde\gamma}(\mathbb R^n\times \mathbb R^m\times \mathbb R^d)$. If $T$ is a one-parameter singular integral operator on $\mathbb R^n\times \mathbb R^m\times \mathbb R^d$, then $T$ is bounded on weighted product Hardy spaces $H^p_{prod,w}(\mathbb R^n\times \mathbb R^m\times \mathbb R^d)$ for $\tilde\gamma<p\le1$.
\end{theorem}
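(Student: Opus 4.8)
The plan is to mimic the two-parameter argument of Theorem \ref{1p}, now carried out in the three-parameter product setting on $\mathbb{R}^n\times\mathbb{R}^m\times\mathbb{R}^d$. First I would fix a Littlewood--Paley decomposition adapted to the product structure: choose $\psi^{(1)},\psi^{(2)},\psi^{(3)}$ radial bump functions on $\mathbb{R}^n,\mathbb{R}^m,\mathbb{R}^d$ respectively, each with sufficiently many vanishing moments, and set $\psi_{j_1,j_2,j_3}=\psi^{(1)}_{j_1}\otimes\psi^{(2)}_{j_2}\otimes\psi^{(3)}_{j_3}$ so that $\sum_{j_1,j_2,j_3}\psi_{j_1,j_2,j_3}\ast\psi_{j_1,j_2,j_3}\ast f=f$ in an appropriate sense, with the associated square function $g(f)=\big(\sum_{j_1,j_2,j_3}|\psi_{j_1,j_2,j_3}\ast f|^2\big)^{1/2}$ characterizing $H^p_{prod,w}$ for $\tilde\gamma<p\le1$ when $w\in A_{p/\tilde\gamma}$. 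This Calderón reproducing formula and the weighted square-function characterization are exactly the tools deferred to Section \ref{pre}, so I would invoke them directly. The operator norm bound then reduces to showing $\|g(Tf)\|_{L^p_w}\lesssim\|g(f)\|_{L^p_w}$, and by the standard duality/discretization this follows from an almost-orthogonality estimate on the kernels $\psi_{j_1,j_2,j_3}\ast T\ast\psi_{k_1,k_2,k_3}$.

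The heart of the matter is the almost-orthogonal estimate: I would show that, for each coordinate $i\in\{1,2,3\}$, the composition $\psi_{j_1,j_2,j_3}\ast K\ast\psi_{k_1,k_2,k_3}(z)$ decays geometrically in $|j_i-k_i|$ together with a kernel bound of the form
\[
\big|\psi_{\vec{\jmath}}\ast K\ast\psi_{\vec{k}}(z)\big|\lesssim \prod_{i=1}^{3}2^{-|j_i-k_i|\epsilon}\,\frac{2^{-(j_i\wedge k_i)N_i}}{\big(2^{-(j_i\wedge k_i)}+|z_i|\big)^{N_i+?}},
\]
with the precise exponents chosen so that, after summing, one recovers an $L^2$-bounded discrete Calderón--Zygmund operator in each variable, with enough room (this is where the regularity $K\in C^3$ and the exponent $\tilde\gamma=\frac{n}{n+1}\vee\frac{m}{m+1}\vee\frac{d}{d+1}$ enter) to absorb the weight. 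Because $T$ is only \emph{one}-parameter, the key point is that convolving with $\psi^{(i)}_{j_i}$ in each variable manufactures the cancellation and regularity in that variable separately out of the single kernel $K$; this is proved by writing $\psi_{\vec{\jmath}}\ast K\ast\psi_{\vec{k}}=\big(\psi_{\vec{\jmath}}\ast(\psi_{\vec{k}}\ast K)\big)$ and estimating via Taylor expansion and the vanishing moments of $\psi^{(i)}$, iterating over the three coordinates. The decay exponent $\epsilon$ one gets must exceed the threshold dictated by $A_{p/\tilde\gamma}$, i.e.\ by how far $p$ is below $1$ and by the $A_\infty$ character of $w$; one takes enough vanishing moments for the $\psi^{(i)}$ to make $\epsilon$ as large as needed.

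Having the almost-orthogonality estimate in hand, I would feed it into the weighted Littlewood--Paley machinery: expand $Tf=\sum_{\vec{k}}\psi_{\vec{k}}\ast\psi_{\vec{k}}\ast f$, apply $\psi_{\vec{\jmath}}\ast$, use the kernel estimate to dominate $|\psi_{\vec{\jmath}}\ast Tf|$ pointwise by a sum over $\vec{k}$ of $\prod_i 2^{-|j_i-k_i|\epsilon}$ times a product of Hardy--Littlewood-type maximal averages of $|\psi_{\vec{k}}\ast f|$, then invoke the Fefferman--Stein vector-valued maximal inequality in the three-parameter weighted setting (valid since $w\in A_{p/\tilde\gamma}\subset A_\infty$ componentwise), and finally sum the geometric series in $\vec{\jmath}-\vec{k}$ to conclude $\|g(Tf)\|_{L^p_w}\lesssim\|g(f)\|_{L^p_w}$. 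Combined with the density of a nice subspace and the square-function characterization of $H^p_{prod,w}(\mathbb{R}^n\times\mathbb{R}^m\times\mathbb{R}^d)$, this gives the asserted boundedness. The main obstacle is bookkeeping in the almost-orthogonality step: one must track all three coordinates simultaneously and verify that the single kernel $K$, with only $C^3$ regularity and the truncated cancellation $\int_{\epsilon<|z|<N}K\le C$, still yields product-type decay with a surplus large enough for the $A_{p/\tilde\gamma}$ weight — the three-parameter case is not harder in principle than the two-parameter Theorem \ref{1p}, but the interplay of the three minima $j_i\wedge k_i$ and the choice of moments requires care.
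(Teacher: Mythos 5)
Your proposal follows essentially the same route as the paper: expand $Tf$ via a discrete Calder\'on reproducing formula, establish a three-parameter almost-orthogonality estimate for $\phi_{j,k,\ell}\ast K\ast\phi_{j',k',\ell'}$ with geometric decay in each index difference and product-type spatial decay of exponent $n+1$, $m+1$, $d+1$ (this is the paper's Lemma~\ref{ortho3}, proved by Taylor expansion against the vanishing moments of each $\phi^{(i)}$ using the $C^3$ regularity and the truncated cancellation of $K$, with the near-diagonal case handled separately), then convert the pointwise bound into a strong-maximal-function estimate at an exponent $r$ with $\tilde\gamma<r\le 1$ (Lemma~\ref{max}) and close via the weighted Fefferman--Stein vector-valued maximal inequality. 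The only cosmetic differences are that the paper works throughout with the discrete square function sampled at corners of dyadic rectangles rather than its continuous counterpart, and the bottleneck controlling $\tilde\gamma$ is really the spatial decay exponent $n+1$ (etc.) coming from the $C^3$ regularity of $K$ rather than the moment count on $\psi^{(i)}$ -- more moments give faster decay in $|j_i-k_i|$, but the spatial tail of $K\ast\phi$ is capped by the smoothness of $K$, and it is this tail that fixes the admissible range of $r$.
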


\begin{remark}\label{r2}
If we impose stronger smoothness conditions of $K$ as in Remark \ref{r1},
then we can similarly conclude that for any $w\in A_{\infty}(\mathbb R^n\times \mathbb R^m\times \mathbb R^d)$, the operator $\mathcal T$ is bounded on $H^p_{flag,w}(\mathbb R^n\times \mathbb R^m\times \mathbb R^d)$ and $H^p_{prod,w}(\mathbb R^n\times \mathbb R^m\times \mathbb R^d)$ for $0<p<\infty$.
\end{remark}

Next we will consider the singular integrals with the following flag
kernels which can be found in \cite{HLW}.

\begin{definition}
A {\it flag kernel} is a distribution $\mathcal{K}$ on $\mathbb R^n\times \mathbb R^m\times \mathbb R^d$ which coincides with a $C^\infty$ function away from the coordinate subspace $x_1 = 0$ and satisfies the following:

\begin{itemize}
    \item[(i)] ({differential inequalities}) For each $\alpha = (\alpha_1, \alpha_2, \alpha_3) \in \mathbb{Z}^3$,
    \[
    \left|\partial_{x_1}^{\alpha_1} \partial_{x_2}^{\alpha_2} \partial_{x_3}^{\alpha_3} \mathcal{K}(x)\right| \lesssim |x_1|^{-n_1 - |\alpha_1|} (|x_1| + |x_2|)^{-n_2 - |\alpha_2|} (|x_1| + |x_2| + |x_3|)^{-n_3 - |\alpha_3|}
    \]
    for $x_1 \neq 0$.

    \item[(ii)] ({cancellation conditions})
    \begin{itemize}
        \item[(a)] Given normalized bump functions $\psi_i$, $i = 1, 2, 3$, on $\mathbb{R}^{n_i}$ and any scaling parameter $r > 0$, define a distribution $\mathcal{K}_{\psi_i, r}$ by setting
        \begin{equation} \label{1-3}
        \langle \mathcal{K}_{\psi_i, r}, \varphi \rangle = \langle \mathcal{K}, (\psi_i)_r \otimes \varphi \rangle
        \end{equation}
        for any test function $\varphi \in \mathcal{S}(\mathbb{R}^{N - n_i})$. Then the distributions $\mathcal{K}_{\psi_i, r}$ satisfy the differential inequalities
        \[
        \left|\partial_{x_2}^{\alpha_2} \partial_{x_3}^{\alpha_3} \mathcal{K}_{\psi_1, r}(x_2, x_3)\right| \lesssim |x_2|^{-n_2 - |\alpha_2|} (|x_2| + |x_3|)^{-n_3 - |\alpha_3|},
        \]
        \[
        \left|\partial_{x_1}^{\alpha_1} \partial_{x_3}^{\alpha_3} \mathcal{K}_{\psi_2, r}(x_1, x_3)\right| \lesssim |x_1|^{-n_1 - |\alpha_1|} (|x_1| + |x_3|)^{-n_3 - |\alpha_3|},
        \]
        \[
        \left|\partial_{x_1}^{\alpha_1} \partial_{x_2}^{\alpha_2} \mathcal{K}_{\psi_3, r}(x_1, x_2)\right| \lesssim |x_1|^{-n_1 - |\alpha_1|} (|x_1| + |x_2|)^{-n_2 - |\alpha_2|}.
        \]
    \item[(b)] For any bump functions $\psi_i$ on $\mathbb{R}^{N - n_i}$ and any parameters $r = (r_1, r_2)$, we define the distributions $\mathcal{K}_{\psi_i, r}$ by \eqref{1-3}. Then the distributions $\mathcal{K}_{\psi_i, r}$, $i = 1, 2, 3$, are one-parameter kernels and satisfy
    \[
    \left|\partial_{x_i}^{\alpha_i} \mathcal{K}_{\psi_i, r}(x_i)\right| \lesssim |x_i|^{-n_i - |\alpha_i|}.
    \]

    \item[(c)] For any bump function $\psi$ on $\mathbb{R}^N$ and $r_1, r_2, r_3 > 0$, we have
    \[
    |\langle \mathcal{K}, \psi(r_1 \cdot, r_2 \cdot, r_3 \cdot) \rangle| \lesssim 1.
    \]
        \end{itemize}
\end{itemize}

Moreover, the corresponding constants that appear in these differential inequalities are independent of $r$, $r_1$, $r_2$.
Then we say that $T_{flag}=\mathcal{K} \ast f$ is a \textit{flag singular integral} $\mathbb R^n\times \mathbb R^m\times \mathbb R^d$.

\end{definition}

In \cite{HLW}, Han et al. obtained the boundedness of the flag singular integrals on 
weighted flag Hardy spaces. Inspired by this result, we prove that the flag singular integrals are also bounded on the weighted product Hardy spaces $H^p_{prod,w}(\mathbb R^n\times \mathbb R^m\times \mathbb R^d)$ as follows. 

\begin{theorem}\label{fp-3}
Suppose that $w\in A_{\infty}(\mathbb R^n\times \mathbb R^m\times \mathbb R^d)$. If $T_{flag}$ is a a flag singular integral on $\mathbb R^n\times \mathbb R^m\times \mathbb R^d$, then $T_{flag}$ is bounded on weighted product Hardy spaces $H^p_{prod,w}(\mathbb R^n\times \mathbb R^m\times \mathbb R^d)$ for $0<p\le1$.
\end{theorem}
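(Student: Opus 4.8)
The plan is to run the Littlewood--Paley (square function) machinery for $H^p_{prod,w}(\mathbb R^n\times\mathbb R^m\times\mathbb R^d)$ together with the associated discrete Calder\'on reproducing formula, both recorded in Section~\ref{pre}. Write $\psi_J=\psi^{(1)}_{j_1}\otimes\psi^{(2)}_{j_2}\otimes\psi^{(3)}_{j_3}$ for the product Littlewood--Paley pieces, indexed by $J=(j_1,j_2,j_3)\in\mathbb Z^3$, and let $R$ range over the dyadic rectangles adapted to the multi-scale $J$, so that $\|h\|_{H^p_{prod,w}}$ is comparable to the $L^p_w$-norm of the discrete square function $g(h)=\big(\sum_J\sum_R|\psi_J\ast h(x_R)|^2\mathbf 1_R\big)^{1/2}$. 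Applying this with $h=T_{flag}f$ and inserting $f=\sum_{J'}\sum_{R'}|R'|\,\psi_{J'}(\cdot-x_{R'})\,(\widetilde\psi_{J'}\ast f)(x_{R'})$ reduces Theorem~\ref{fp-3} to kernel size-and-decay estimates for the operators $\psi_J\ast\mathcal K\ast\psi_{J'}$.

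The heart of the argument is an almost-orthogonality estimate: for any prescribed $L,\epsilon>0$ the kernel of $\psi_J\ast\mathcal K\ast\psi_{J'}$ is bounded by
$$
\Big(\prod_{i=1}^{3}2^{-\epsilon\,|(j_1\wedge\cdots\wedge j_i)-(j_1'\wedge\cdots\wedge j_i')|}\Big)\,\Phi^{L}_{J,J'}(x),
$$
where $\Phi^{L}_{J,J'}$ is an $L^1$-normalized product bump of order $L$ at the \emph{nested} scales $2^{-(j_1\wedge j_1')}$, $2^{-(j_1\wedge j_1'\wedge j_2\wedge j_2')}$, $2^{-(j_1\wedge j_1'\wedge j_2\wedge j_2'\wedge j_3\wedge j_3')}$ dictated by the flag geometry of $\mathcal K$. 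To prove it one expands $\mathcal K\ast\psi_{J'}$, distinguishes cases by the relative order of each pair $(j_i,j_i')$, and in each product direction in which a Littlewood--Paley factor carries the finer scale uses the vanishing moments of that factor together with the differential inequalities~(i); in the remaining directions one invokes, in this order, the cancellation conditions (ii)(a) on the two-parameter reductions $\mathcal K_{\psi_i,r}$, (ii)(b) on the one-parameter reductions, and (ii)(c), which is precisely what turns the flag-type bound into a genuine product bump with constants uniform in the scaling parameters $r,r_1,r_2$. This step is the three-parameter counterpart of the estimates underlying Theorem~\ref{fp} and of the computations in~\cite{HLW}.

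Granting this estimate, one sums over $R'$ and $J'$. Since $\mathcal K\in C^\infty$ lets $L$ (hence also $\epsilon$) be taken arbitrarily large, we may fix $0<r<p$ with $w\in A_{p/r}(\mathbb R^n\times\mathbb R^m\times\mathbb R^d)$ — possible because $w\in A_\infty$ — and for such $r$ dominate $\sum_{R'}\Phi^{L}_{J,J'}(\cdot,x_{R'})|R'||\widetilde\psi_{J'}\ast f(x_{R'})|$ by $\big(M_s(\sum_{R'}|\widetilde\psi_{J'}\ast f(x_{R'})|^{r}\mathbf 1_{R'})\big)^{1/r}$, where $M_s$ is the strong (product) Hardy--Littlewood maximal operator. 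Cauchy--Schwarz against the summable geometric factors, summation over $J$, and the identity $\big(\sum_{R'}|c_{R'}|^{r}\mathbf 1_{R'}\big)^{2/r}=\sum_{R'}|c_{R'}|^{2}\mathbf 1_{R'}$ (the $R'$ at a fixed scale being disjoint) give
$$
g(T_{flag}f)(x)\lesssim\Big(\sum_{J'}M_s\big(h_{J'}\big)(x)^{2/r}\Big)^{1/2},\qquad h_{J'}=\sum_{R'}|\widetilde\psi_{J'}\ast f(x_{R'})|^{r}\mathbf 1_{R'}.
$$
The weighted Fefferman--Stein vector-valued inequality for $M_s$ on $L^{p/r}_w(\ell^{2/r})$, valid since $w\in A_{p/r}$, then yields $\|g(T_{flag}f)\|_{L^p_w}\lesssim\|g_{\widetilde\psi}(f)\|_{L^p_w}\approx\|f\|_{H^p_{prod,w}}$ on a dense subclass, and a standard density/limiting argument completes the proof.

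I expect the main obstacle to be the tension between the flag structure of $\mathcal K$ and the full product structure of $H^p_{prod,w}$: the composition $\psi_J\ast\mathcal K\ast\psi_{J'}$ decays only in the \emph{nested} scale differences and concentrates on a flag-shaped region rather than a rectangle, so extracting honest decay in each of the three product variables and reshaping the region of concentration into a product rectangle (which alone the strong maximal function can absorb) forces a careful descent through the hierarchy of cancellation conditions (i)--(ii)(c). Controlling the lowest-smoothness interactions in each variable, and keeping every constant uniform over the scaling parameters of the flag kernel, is where the technical weight of the proof lies.
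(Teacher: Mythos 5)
Your outline matches the paper's template at the top level — discrete Calder\'on reproducing formula, almost-orthogonality for $\psi_J\ast\mathcal K\ast\psi_{J'}$, domination by the strong maximal function, weighted Fefferman--Stein, density — but the key lemma you propose is too weak, and that is a genuine gap. You claim decay only in the nested scale differences $|(j_1\wedge\cdots\wedge j_i)-(j_1'\wedge\cdots\wedge j_i')|$ together with a bump $\Phi^L_{J,J'}$ localized at nested flag scales $2^{-(j_1\wedge j_1')}$, $2^{-(j_1\wedge j_1'\wedge j_2\wedge j_2')}$, $\ldots$ Such a bound concentrates on a flag-shaped region rather than a product rectangle, and therefore does not feed into the product-type maximal estimate (Lemma \ref{max}, the one used in the proof of Theorem \ref{1p-3}) that your next step relies on. You flag this yourself as ``the main obstacle'' and assert, without argument, that the cancellation condition (ii)(c) ``turns the flag-type bound into a genuine product bump''; but that is precisely the nontrivial claim needing proof, and as written your central estimate does not deliver it.

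The paper resolves the issue in one line: by Nagel--Ricci--Stein \cite{NRS}, flag kernels on $\mathbb R^n\times\mathbb R^m\times\mathbb R^d$ form a subclass of three-parameter product singular integral kernels. Theorem \ref{fp-3} therefore reduces to the boundedness of three-parameter product singular integrals on $H^p_{prod,w}$, for which the correct almost-orthogonality is Proposition \ref{ortho4}: independent exponential decay in each of $|j-j'|$, $|k-k'|$, $|\ell-\ell'|$, together with a genuine product bump at scales $2^{j\vee j'}$, $2^{k\vee k'}$, $2^{\ell\vee\ell'}$. The paper proves Proposition \ref{ortho4} by iterating the one-parameter estimate (\ref{ortho5}) one variable at a time, each contraction producing a lower-parameter product kernel whose constants carry the gain from the previous step; the rest of the argument is then literally Section 4. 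In short, the efficient route is to enlarge the kernel class first (flag $\subset$ product) and then prove the almost-orthogonality for the larger, better-structured class. Your route — descending directly through the flag cancellation hierarchy — would need an additional argument showing that the resulting bound is product-shaped and not merely flag-shaped, and your write-up does not supply that argument.
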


This paper is organized as follows. In Section \ref{pre}, we give some known results on weighted flag and product Hardy spaces along with the Fefferman--Stein vector valued maximal inequality
on product domain. Then we prove the boundedness of one-parameter singular integrals in the two cases of $\mathbb R^n\times \mathbb R^m$ and $\mathbb R^n\times \mathbb R^m\times \mathbb R^d$ on the corresponding weighted flag Hardy spaces in Section 3. In Section 4, we focus on the boundedness of  
one-parameter singular integrals in the two cases of $\mathbb R^n\times \mathbb R^m$ and $\mathbb R^n\times \mathbb R^m\times \mathbb R^d$ on the corresponding weighted product Hardy spaces.
In Section 5, we show that the flag singular integrals on $\mathbb R^n\times \mathbb R^m$ and $\mathbb R^n\times \mathbb R^m\times \mathbb R^d$ are bounded on the corresponding weighted product Hardy spaces by using the weighted product Hardy spaces boundedness of the three-parameter singular integrals. In this last section, we will conclude by briefly considering the extension of the main results in this paper to Hardy spaces for ball quasi-Banach function spaces.

Throughout the paper, the letter
$C$ denotes a positive constant that may vary at each occurrence
but is independent to the main parameter, $A\lesssim B$ means that there are a constant
$C>0$ independent of the the main parameter such that $A\leq CB$ and $A\sim B$ denotes that $A\lesssim B$ and $B\lesssim A$.

\section{Preliminaries}\label{pre}
In this section, we recall some definitions and the basic results on weighted flag and product Hardy spaces. 
\subsection{Strong Hardy--Littlewood maximal operators on weighted Lebesgue spaces}\quad

For convenience, $\mathbb R^\mathcal X$ can represent either
$\mathbb R^n\times \mathbb R^m$ or $\mathbb R^n\times \mathbb R^m\times \mathbb R^d$.
Denote by $L_{{\rm loc}}^{1}\left(\mathbb{R}^\mathcal X \right)$ the set of all locally integrable functions on $\mathbb{R}^\mathcal X$. 
In the multi-parameter case, it is natural to replace the classical Hardy--Littlewood maximal operator $M$ by the {\it strong Hardy--Littlewood maximal operators} $M_s$, which is defined by setting, for all $f\in L_{{\rm loc}}^{1}\left(\mathbb{R}^\mathcal X\right)$ and $x\in \mathbb{R}^\mathcal X$,
\begin{align*}\label{eq:second7}
	M_sf\left(x\right):= \sup\limits_{x\in R} \frac{1}{|R|} \int_{R} \left|f\left(y\right)\right|\,dy,
\end{align*}
where the supremum of $R$ are taken over all rectangles $R\in \mathbb R^\mathcal X$.
First we recall the definitions of product weights in the multi-parameter setting. For $1<p<\infty$, a nonnegative locally integrable function $w \in A_p\left(\mathbb{R}^\mathcal X\right)$ if there exists a constant $C>0$ such that
$$
\left(\frac{1}{|R|} \int_R w(x) d x\right)\left(\frac{1}{|R|} \int_R w(x)^{-1 /(p-1)} d x\right)^{p-1} \leqslant C.
$$
We say $w \in A_1\left(\mathbb{R}^\mathcal X\right)$ if there exists a constant $C>0$ such that
$$
M_s w(x) \leqslant C w(x) 
$$
for almost every $x \in \mathbb{R}^\mathcal X$.
We define $w \in A_{\infty}\left(\mathbb{R}^\mathcal X\right)$ by
$$
A_{\infty}\left(\mathbb{R}^\mathcal X\right)=\bigcup_{1 \leqslant p<\infty} A_p\left(\mathbb{R}^\mathcal X\right)
$$

Next, we recall 
the well-known weighted boundedness of the strong maximal operators
$\mathcal M_s$ as follows.

\begin{proposition}\label{FS}\cite{gr85}\label{max}\quad Let $1<p<\infty$ and $w\in A_{p}(\mathbb R^\mathcal X).$ Then $\mathcal M_s$ is of type
$(L^p_w(\mathbb R^\mathcal X),L^p_w(\mathbb R^\mathcal X))$.
Moreover, for any $1<s<\infty$, 
\begin{align*}
		\left\|\left\{\sum_{j=1}^{\infty}\left[M_s\left(f_{j}\right)\right]^{s}\right\}^{1 / s}\right\|_{L^p_w(\mathbb R^\mathcal X)} 
		\le C\left\|\left\{\sum_{j=1}^{\infty}\left|f_{j}\right|^{s}\right\}^{1 / s}\right\|_{L^p_w(\mathbb R^\mathcal X)}
	\end{align*}
\end{proposition}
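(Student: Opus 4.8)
The plan is to deduce this multi-parameter weighted Fefferman--Stein inequality from the one-parameter one by iterating in the separate groups of variables; I describe the case $\mathbb R^{\mathcal X}=\mathbb R^n\times\mathbb R^m$, the case $\mathbb R^n\times\mathbb R^m\times\mathbb R^d$ being identical with one additional iteration. First, for $i\in\{1,2\}$ let $M^{(i)}$ denote the uncentered Hardy--Littlewood maximal operator acting in the $i$-th group of variables with the other one frozen. Since every rectangle $R=I\times J$ (with $I\subset\mathbb R^n$ and $J\subset\mathbb R^m$ cubes) satisfies $\frac{1}{|R|}\int_R|f|\le M^{(1)}\!\big(M^{(2)}f\big)(x)$ for every $x\in R$ --- average in the $\mathbb R^m$-variable first, then in the $\mathbb R^n$-variable --- one gets the pointwise domination $M_sf\le M^{(1)}M^{(2)}f$. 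Hence it suffices to prove the scalar $L^p_w$-bound and the vector-valued bound for the iterated operator $M^{(1)}M^{(2)}$.

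Second, I would invoke the classical slice characterization of product Muckenhoupt weights: if $w\in A_p(\mathbb R^n\times\mathbb R^m)$, then for a.e.\ $x_2\in\mathbb R^m$ the slice $w(\cdot,x_2)$ belongs to $A_p(\mathbb R^n)$ with $[w(\cdot,x_2)]_{A_p(\mathbb R^n)}\le[w]_{A_p(\mathbb R^n\times\mathbb R^m)}$, and symmetrically for the slices $w(x_1,\cdot)\in A_p(\mathbb R^m)$. (One tests the defining inequality on $I\times J$ and lets $|J|\to0$ at a common Lebesgue point of $x_2\mapsto\int_I w(x_1,x_2)\,dx_1$ and of $x_2\mapsto\int_I w(x_1,x_2)^{-1/(p-1)}\,dx_1$, simultaneously for every cube $I$ with rational center and side length.) Then, fixing $1<s<\infty$ and a sequence $(f_j)_j$, I apply Tonelli's theorem to the weighted integral of $\big(\sum_j(M^{(1)}M^{(2)}f_j)^s\big)^{p/s}$: for each fixed $x_2$ the inner $\mathbb R^n$-integral is the $p$-th power of a one-parameter weighted vector-valued maximal norm on $\mathbb R^n$ for the weight $w(\cdot,x_2)$ and the functions $x_1\mapsto M^{(2)}f_j(x_1,x_2)$. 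Applying there the one-parameter weighted Fefferman--Stein inequality (Andersen--John), with a constant depending only on $n,p,s$ and $[w]_{A_p}$ and hence uniform in $x_2$, and integrating the result in $x_2$ gives
$$
\Big\|\Big(\sum_j\big(M^{(1)}M^{(2)}f_j\big)^s\Big)^{1/s}\Big\|_{L^p_w}\lesssim\Big\|\Big(\sum_j\big(M^{(2)}f_j\big)^s\Big)^{1/s}\Big\|_{L^p_w}.
$$
Repeating the same step in the $\mathbb R^m$-variable, using the slices $w(x_1,\cdot)\in A_p(\mathbb R^m)$, bounds the right-hand side by $\big\|\big(\sum_j|f_j|^s\big)^{1/s}\big\|_{L^p_w}$, which proves the vector-valued inequality. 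The scalar boundedness of $M_s$ on $L^p_w$ is then the special case of a single function (or, equivalently, the same two-fold iteration applied to Muckenhoupt's scalar weighted strong maximal theorem). For $\mathbb R^n\times\mathbb R^m\times\mathbb R^d$ one inserts a third iteration in the $\mathbb R^d$-variable in the obvious order.

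The only genuinely delicate point --- the main obstacle --- is the slice characterization of product $A_p$ weights with control of the constant, since that is the single place where the product structure of the weight class is really exploited; once it is in hand, the remainder is just Tonelli plus the known one-parameter Fefferman--Stein bound. An alternative that avoids this lemma is multi-parameter Rubio de Francia extrapolation starting from Muckenhoupt's scalar weighted strong maximal bound, at the cost of invoking the extrapolation machinery. Since the statement is quoted from \cite{gr85}, in the paper I would simply cite it; the iteration sketched above is the self-contained argument.
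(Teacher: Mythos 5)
The paper states this proposition as a known result and cites \cite{gr85} without giving a proof, so there is nothing in the paper to compare your argument against; your last sentence already anticipates this. Your self-contained iteration argument is correct and is essentially the standard route (it is the proof one finds in Chapter~IV of \cite{gr85} and in treatments of product weights generally): the pointwise bound $M_s f\le M^{(1)}M^{(2)}f$, the a.e.\ slice characterization of rectangle $A_p$ weights with uniform control of the constant via Lebesgue points and a countable family of test cubes, and then Tonelli plus the one-parameter Andersen--John weighted Fefferman--Stein inequality applied twice (three times in the $\mathbb R^n\times\mathbb R^m\times\mathbb R^d$ case). The one point worth flagging, which you handle correctly but only implicitly, is that after the first application of the one-parameter bound the inner functions $x_1\mapsto M^{(2)}f_j(x_1,x_2)$ still depend on $x_2$; this causes no trouble because $M^{(1)}$ acts only in $x_1$ at fixed $x_2$ and the constant from the slice $A_p$ bound is uniform in $x_2$, so Tonelli applies cleanly. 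The extrapolation alternative you mention would also work but is heavier machinery than needed.
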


\subsection{Weighted product Hardy spaces}\quad

In the subsection, we recall the Littlewood--Paley characterization for the three-parameter weighted Hardy spaces on $\mathbb R^n\times \mathbb R^m\times \mathbb R^d$. For more details, we refer the readers
to \cite{Ru}. 
For conveniences,  hereafter $Q\in \mathcal R^{j,k,\ell}$ means that $Q=I\times J\times K\subset \mathbb R^n\times\mathbb R^m\times \mathbb R^d$
with side-length \(\ell(I) = 2^{j}, \ell(J) = 2^{k} \) and \( \ell(K) = 2^{\ell} \), \( x_Q \) denotes the left lower corner
of $Q$. Similarly, $Q\in \mathcal R^{j,k}$ means that $Q=I\times J\subset \mathbb R^n\times\mathbb R^m$
with side-length \(\ell(I) = 2^{j}\) and  \( \ell(J) = 2^{k} \).
The definition of weighted product Hardy spaces on $\mathbb R^n\times \mathbb R^m$ is nearly identical but easier, so we omit it.
Let \(\mathcal S(\mathbb R^n\times \mathbb R^m\times \mathbb R^d)\) denote the set of Schwartz functions in \(\mathbb R^n\times \mathbb R^m\times \mathbb R^d\)
and $x=(x_1,x_2,x_3)\in\mathbb R^n\times \mathbb R^m\times \mathbb R^d$. We denote \( f \in \mathcal{S}_0(\mathbb{R}^3)\),
if $f\in \mathcal S(\mathbb R^n\times \mathbb R^m\times \mathbb R^d)$ and
\text{for\, all} indices $\alpha, \beta, \gamma$ of nonnegative integers,
\[\int_{\mathbb{R}} f(x_1,x_2,x_3)x_1^{\alpha} dx_1 = \int_{\mathbb{R}} f(x_1,x_2,x_3)x_2^{\beta} dx_2 = \int_{\mathbb{R}} f(x_1,x_2,x_3)x_3^{\gamma} dx_3 = 0.\]
The semi-norms of $f\in \mathcal{S}_0(\mathbb R^n\times \mathbb R^m\times \mathbb R^d)$ are the Schwartz semi-norms.
The dual space of \( \mathcal{S}_0(\mathbb R^n\times \mathbb R^m\times \mathbb R^d) \) is denoted by \( \mathcal{S}_0'(\mathbb R^n\times \mathbb R^m\times \mathbb R^d) \). 

Let \( \psi^{(1)} \in \mathcal{S}(\mathbb{R}^n), \psi^{(2)} \in \mathcal{S}(\mathbb{R}^m), \psi^{(3)} \in \mathcal{S}(\mathbb{R}^d) \) and satisfy
\begin{equation}\label{1.1}
\sum_{j \in \mathbb{Z}} |\widehat{\psi^{(1)}}(2^{-j} \xi_1)|^2 = 1 \quad \text{for all } \xi_1 \in \mathbb{R}^n \setminus \{0\},
\end{equation}
\begin{equation}\label{1.2}
\sum_{k \in \mathbb{Z}} |\widehat{\psi^{(2)}}(2^{-k} \xi_2)|^2 = 1 \quad \text{for all } \xi_2 \in \mathbb{R}^m \setminus \{0\},
\end{equation}
\begin{equation}\label{1.3}
\sum_{\ell \in \mathbb{Z}} |\widehat{\psi^{(3)}}(2^{-\ell} \xi_3)|^2 = 1 \quad \text{for all } \xi_3 \in \mathbb{R}^d \setminus \{0\},
\end{equation}
and the support conditions
\begin{equation}\label{1.4}
\mbox{supp}\;\widehat{\psi^{(i)}}(\xi_i)\subset \{\xi_i: \frac{1}{2}< |\xi_i|\le 2\},
\end{equation}\
for $i=1,2,3$. Denote
\[
\psi_{j,k,\ell}(x_1,x_2,x_3) = 2^{-j-k-\ell} \psi^{(1)}(2^{-j} x_1) \psi^{(2)}(2^{-k}x_2)  \psi^{(3)}(2^{-\ell} x_3),
\]
where
\[
\psi_j^{(1)}(x_1) = 2^{-j} \psi^{(1)}(2^{-j} x_1), \quad \psi_k^{(2)}(x_2) = 2^{-k} \psi^{(2)}(2^{-k} x_2), \quad \psi_{\ell}^{(3)}(x_3) = 2^{-\ell} \psi^{(3)}(2^{-\ell} x_3).
\]

By the Fourier transform, we can obtain the continuous version of the Calder\'on identity on \( L^2(\mathbb R^n\times \mathbb R^m\times \mathbb R^d) \), that is, for any \( f \in L^2(\mathbb R^n\times \mathbb R^m\times \mathbb R^d) \)
and $u:=(x,y,z)\in \mathbb R^n\times \mathbb R^m\times \mathbb R^d$,
\[
f(x,y,z) = \sum_{j,k,\ell \in \mathbb{Z}} \psi_{j,k,\ell} \ast \psi_{j,k,\ell} \ast f(x,y,z). 
\]

We also need the following discrete Calder\'on identity in $\mathcal{S}_0'(\mathbb R^n\times \mathbb R^m\times \mathbb R^d)$, whose proof is nearly identical to that of Theorem 1.1 in \cite{HLLTW}.
Also see \cite{FJ,FHLL} for more details on the Calder\'on  reproducing formula.

\begin{proposition}\label{disc} Let \( \psi_{j,k,\ell} \) be the same as in (\ref{1.1})-(\ref{1.4}). Then
\begin{align}\label{dsc}
f(u) = \sum_{j,k,\ell} \sum_{Q\in \mathcal R^{j,k,\ell}} | Q | {\psi}_{j,k,\ell}(u-u_Q) \ast \psi_{j,k,\ell} \ast f (u_Q),
\end{align}
where the series in (\ref{dsc}) converges in the norm of \(L^2(\mathbb R^n\times\mathbb R^m\times \mathbb R^d)\), in the norm of \( \mathcal{S}_0(\mathbb R^n\times\mathbb R^m\times \mathbb R^d) \) and in the dual space \( \mathcal{S}_0'(\mathbb R^n\times\mathbb R^m\times \mathbb R^d) \).
\end{proposition}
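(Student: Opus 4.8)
The plan is to obtain \eqref{dsc} from the continuous Calder\'on identity recorded above by discretising each frequency piece $\psi_{j,k,\ell}\ast\psi_{j,k,\ell}\ast f$ via a sampling argument, and then to control the resulting double series in $L^2$, in $\mathcal S_0$ and in $\mathcal S_0'$ with the usual almost-orthogonality estimates; the proof for $\mathbb R^n\times\mathbb R^m$ is word for word the same with one scale parameter removed. The starting point is the family of almost-orthogonality estimates obtained by running the one-parameter estimate in each of the three groups of variables: for every $\varepsilon\in(0,1)$ and $M>0$ there is $C$ such that, for all $\varphi\in\mathcal S_0(\mathbb R^n\times\mathbb R^m\times\mathbb R^d)$, all $(j,k,\ell)$ and all $Q\in\mathcal R^{j,k,\ell}$ with corner $u_Q=(x_{Q,1},x_{Q,2},x_{Q,3})$,
\[
\big|\psi_{j,k,\ell}\ast\varphi(u_Q)\big|\le C\,2^{-(|j|+|k|+|\ell|)\varepsilon}\,\prod_{i=1}^{3}\big(1+|x_{Q,i}|\big)^{-M},
\]
the decay as $j,k,\ell\to-\infty$ coming from the cancellation \eqref{1.1}--\eqref{1.3} and the decay as $j,k,\ell\to+\infty$ from the smoothness of the $\psi^{(i)}$ together with the vanishing moments of $\varphi$. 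Pairing against a test function, the same bounds hold with $\varphi$ replaced by any $f\in\mathcal S_0'$, and they already show that $\sum_{j,k,\ell}\psi_{j,k,\ell}\ast\psi_{j,k,\ell}\ast f$ converges in $\mathcal S_0'$ (and in $\mathcal S_0$ for $f\in\mathcal S_0$), so that the continuous identity is valid in all three topologies.

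For the discretisation, fix $(j,k,\ell)$ and set $h:=\psi_{j,k,\ell}\ast f$, which is a smooth function of polynomial growth. By the support condition \eqref{1.4}, $\widehat h$ is supported in a product of dyadic annuli which, precisely because $2<\pi$, sits strictly inside the Nyquist region of the lattice $\Lambda_{j,k,\ell}:=2^{j}\mathbb Z^{n}\times 2^{k}\mathbb Z^{m}\times 2^{\ell}\mathbb Z^{d}$ of corners of the rectangles $Q\in\mathcal R^{j,k,\ell}$. Hence $h$ is reproduced from its samples on $\Lambda_{j,k,\ell}$ by a sampling formula whose reconstruction kernel can be chosen with Fourier transform identically equal to $1$ on $\operatorname{supp}\widehat{\psi_{j,k,\ell}}$; convolving that formula with $\psi_{j,k,\ell}$ then absorbs the reconstruction kernel back into $\psi_{j,k,\ell}$ and yields the \emph{exact} identity
\[
\psi_{j,k,\ell}\ast\psi_{j,k,\ell}\ast f(u)=\sum_{Q\in\mathcal R^{j,k,\ell}}|Q|\,\psi_{j,k,\ell}(u-u_Q)\,\psi_{j,k,\ell}\ast f(u_Q),
\]
with \emph{no} error term. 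This is exactly the feature that allows the same $\psi$ to appear on both sides of \eqref{dsc}; summing over $(j,k,\ell)$ and invoking the continuous identity then gives \eqref{dsc} at the level of formal manipulations. (Alternatively one could discretise the continuous identity by a Riemann sum and remove the error by Coifman's argument, but at the price of replacing the outer $\psi$ by a bump function that merely has the same size, smoothness and cancellation; the sampling route reproduces \eqref{dsc} verbatim.)

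It remains to justify convergence of the double series in the three topologies. For $f\in L^2$ the inner sum is, for each fixed scale, an $L^2$-stable sampling series, so $\sum_{Q\in\mathcal R^{j,k,\ell}}|Q|\,|\psi_{j,k,\ell}\ast f(u_Q)|^2\sim\|\psi_{j,k,\ell}\ast f\|_{L^2}^2$, while $\sum_{j,k,\ell}\|\psi_{j,k,\ell}\ast\psi_{j,k,\ell}\ast f\|_{L^2}^2\lesssim\|f\|_{L^2}^2$ because $\sum_{j,k,\ell}|\widehat{\psi_{j,k,\ell}}|^2\equiv1$; a Cotlar--Stein/Littlewood--Paley argument assembles the full sum. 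For $\mathcal S_0$ and $\mathcal S_0'$ one combines the geometric decay in $(|j|,|k|,|\ell|)$ of the almost-orthogonality estimates with the Schwartz decay of $\psi_{j,k,\ell}(\,\cdot\,-u_Q)$ and of $\psi_{j,k,\ell}\ast f(u_Q)$ in $u_Q$ to sum the (absolutely convergent) double series, the $\mathcal S_0'$ statement following from the $\mathcal S_0$ one by duality; this is precisely the computation performed for Theorem 1.1 of \cite{HLLTW}, to which we refer for the details.

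The main obstacle is the almost-orthogonality package of the first step: one must produce, uniformly in all parameters and with constants independent of $f$, both the geometric decay in the scale differences and enough polynomial decay in the lattice variable $u_Q$ to make the double series converge in the Schwartz topology, so that \eqref{dsc} is a genuine identity in $\mathcal S_0'$ rather than a formal one. Once these estimates are available, the sampling step and the transfer between the $L^2$, $\mathcal S_0$ and $\mathcal S_0'$ topologies are routine, as is the passage to the two-parameter setting on $\mathbb R^n\times\mathbb R^m$.
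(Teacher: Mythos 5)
Your approach is correct and is essentially the standard route to this identity: since the paper proves Proposition~2.3 only by reference to Theorem~1.1 of \cite{HLLTW}, it is relying on exactly the Frazier--Jawerth $\varphi$-transform argument you lay out, namely (i) the continuous Calder\'on identity from the Littlewood--Paley resolution of unity, (ii) per-scale exact discretisation of $\psi_{j,k,\ell}\ast\psi_{j,k,\ell}\ast f$ by Poisson summation on the lattice $2^j\mathbb Z^n\times 2^k\mathbb Z^m\times 2^\ell\mathbb Z^d$ (made possible because the annular support $\{1/2<|\xi_i|\le 2\}$ scaled to frequency $\sim 2^{-j_i}$ fits strictly inside the Nyquist cell, which is precisely your $2<\pi$ observation and is what lets the same $\psi$ appear on both sides with no $T_N^{-1}$ or tilded family), and (iii) almost-orthogonality bounds with geometric decay in $|j|+|k|+|\ell|$ and polynomial decay in $u_Q$ to sum the triple series in $L^2$, in $\mathcal S_0$, and by duality in $\mathcal S_0'$. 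Your sketch correctly identifies the key point that the band-limitedness forces the sampling error to vanish identically, which is what distinguishes Proposition~2.3 from the compactly-supported-in-space reproducing formulas (Theorem~3.3 and Lemma~4.2 of the paper), where a Coifman-type $T_N^{-1}$ is unavoidable.

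One small imprecision worth flagging: the line ``the same bounds hold with $\varphi$ replaced by any $f\in\mathcal S_0'$'' should be read as saying that $\psi_{j,k,\ell}\ast f(u_Q)$ grows at most polynomially in $u_Q$ with a negative power of $2^{|j|+|k|+|\ell|}$ when paired against the semi-norms, not that a distribution satisfies a pointwise Schwartz-type bound; this is what one actually needs to dualise the $\mathcal S_0$ convergence, and it is how \cite{HLLTW} phrases it. Also, with infinitely many vanishing moments the decay rate $\varepsilon$ can be taken arbitrarily large, so the restriction $\varepsilon\in(0,1)$ is unnecessary (though harmless). Neither point affects the correctness of the argument.
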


For \( f \in \mathcal{S}_0'(\mathbb R^n\times\mathbb R^m\times \mathbb R^d) \) and $x\in \mathbb R^n\times\mathbb R^m\times \mathbb R^d$, we define the {\it discrete Littlewood--Paley square function} on \(\mathbb R^n\times\mathbb R^m\times \mathbb R^d\) by
\[
\mathcal{G}^d(f)(u) := \left(  \sum_{j,k,\ell} \sum_{Q\in \mathcal R^{j,k,\ell}} |\psi_{j,k,\ell} \ast f (u_Q) |^2 \chi_{Q}(u_Q) \right)^{\frac{1}{2}},
\]
where \( \psi_{j,k,\ell} \) satisfies the same conditions as (\ref{1.1})-(\ref{1.4}). 
When \( 1 < p < \infty \), by the iteration argument, vector-valued Littlewood--Paley--Stein estimates and the duality argument, we can get \[ \| f \|_{L^{p}(\mathbb R^n\times\mathbb R^m\times \mathbb R^d)} \sim \| \mathcal{G}^d(f) \|_{L^{p}(\mathbb R^n\times\mathbb R^m\times \mathbb R^d)}. \]

\begin{definition}
For $0<p<\infty$ and $w\in A_\infty(\mathbb R^n\times\mathbb R^m\times \mathbb R^d)$, we define the {\it weighted product Hardy space}
$$H^p_w(\mathbb R^n\times\mathbb R^m\times \mathbb R^d)
=\{f\in \mathcal{S}_0'(\mathbb R^n\times\mathbb R^m\times \mathbb R^d):\mathcal{G}_d(f)\in L^p_w (\mathbb R^n\times\mathbb R^m\times \mathbb R^d)\}$$
with the norm
$$\|f\|_{H^p_w(\mathbb R^n\times\mathbb R^m\times \mathbb R^d)}
=\|\mathcal{G}^d(f)\|_{L^p_w(\mathbb R^n\times\mathbb R^m\times \mathbb R^d)}.$$
\end{definition}






\subsection{Weighted flag Hardy spaces}\quad

To recall the weighted flag Hardy space theory, 
appropriate test functions and distributions are needed.
For this purpose, we recall flag test functions as follows.
The definition of weighted flag Hardy spaces on $\mathbb R^n\times \mathbb R^m$ is very similar but easier, so for simplicity we only give the precise definitions of weighted flag Hardy spaces on $\mathbb R^n\times \mathbb R^m\times \mathbb R^d$. For more details
on flag singular integrals and flag Hardy spaces, see \cite{CHW, HLW,WW}. 

\begin{definition} A Schwartz function $f$ on $\mathbb{R}^N$ is said to be a {\it flag test function} in $\mathcal{S}_{\mathcal{F}}\left(\mathbb{R}^N\right)$ if it satisfies the partial cancellation conditions
$$\int_{\mathbb{R}^{d}} f\left(x, y, z\right) z^\alpha dz=0$$ 
for all multi-indices $\alpha$ and every $\left(x, y\right) \in \mathbb{R}^{n+m}$.
The semi-norms on $\mathcal{S}_{\mathcal{F}}\left(\mathbb{R}^N\right)$ are the same as the classical ones on $\mathcal{S}\left(\mathbb{R}^N\right)$ and these semi-norms make $\mathcal{S}_{\mathcal{F}}$ a topological vector space. Let $\mathcal{S}_{\mathcal{F}}^{\prime}\left(\mathbb{R}^N\right)$ denote the topological dual space of $\mathcal{S}_{\mathcal{F}}\left(\mathbb{R}^N\right)$.
\end{definition}

Let $N_1=n+m+d, N_2=m+d$ and $N_3=d$. For $i=1,2,3$, let $\psi^{(i)} \in \mathcal{S}\left(\mathbb{R}^{N_i}\right)$ satisfy
$$
\operatorname{supp} \widehat{\psi^{(i)}} \subset\left\{\xi^i \in \mathbb{R}^{N_i}: 1 / 2<\left|\xi^i\right| \leq 2\right\}
$$
and
$$
\sum_{j_i \in \mathbb{Z}} \widehat{\psi^{(i)}}\left(2^{j_i} \xi^i\right)^2=1 \quad \text { for all } \xi^i \in \mathbb{R}^{N_i} \backslash\{0\} .
$$

Define $\psi_{j_i}^{(i)}\left(x^i\right)=2^{-j_i N_i} \psi^{(i)}\left(2^{-j_i} x^i\right), x^i \in \mathbb{R}^{N_i} \text { and } \widetilde{\psi}_{j_i}^{(i)}=\delta_{\mathbb{R}^{N-N_i}} \otimes \psi_{j_i}^{(i)}, i=1,2,3.$
For $J=\left(j_1, j_2, j_3\right) \in \mathbb{Z}^3$, set $\psi_J=\widetilde{\psi}_{j_1}^{(1)} * \widetilde{\psi}_{j_2}^{(2)} * \widetilde{\psi}_{j_3}^{(3)}$. 
Let $u:=(x_1,x_2,x_3)\in \mathbb R^n\times\mathbb R^m \times \mathbb R^d$.
Then we get the Calder\'on reproducing formula
$$
f(u)=\sum_{J \in \mathbb{Z}^3} \sum_{R \in \mathcal{R}_{\mathcal{F}}^J}|R| \psi_J\left(u-u_R\right) \psi_J * f\left(u_R\right),
$$
where $u_R$ denotes the "lower left corner" of $R$
and the series converges in $L^2(\mathbb R^N),$ $\mathcal S_{\mathcal F}(\mathbb R^N)$ and $\mathcal S'_{\mathcal F}(\mathbb R^N)$ whenever $f$ is in the corresponding space.
Based on the above reproducing formula, the \textit{Littlewood--Paley--Stein square function} of \( f \in S'_{\mathcal{F}}(\mathbb{R}^N) \) is defined by
\[
g_{\mathcal{F}}(f)(u) = \left( \sum_{J \in \mathbb{Z}^3} \sum_{R \in \mathcal{R}_{\mathcal{F}}^J} |\psi_J\ast f(u_R)|^2 \chi_R(u) \right)^{\frac{1}{2}},
\]
where \( \chi_R \) is the indicator function of \( R \).

\begin{definition}
Let \( 0 < p < \infty \) and \( w \in A_{\infty}(\mathbb{R}^n\times\mathbb R^m\times \mathbb R^d) \). The \textit{weighted flag Hardy space} \( H_{flag,w}^p(\mathbb{R}^n\times\mathbb R^m\times \mathbb R^d) \) is defined by
\[
H_{flag,w}^p(\mathbb{R}^n\times\mathbb R^m\times \mathbb R^d) = \{ f \in S'_{\mathcal{F}}(\mathbb{R}^n\times\mathbb R^m\times \mathbb R^d) : g_{\mathcal{F}}(f) \in L_w^p(\mathbb{R}^n\times\mathbb R^m\times \mathbb R^d) \}
\]
with quasi-norm \( \|f\|_{H_{flag,w}^p(\mathbb{R}^n\times\mathbb R^m\times \mathbb R^d)} := \|g_{\mathcal{F}}(f)\|_{L_w^p(\mathbb{R}^n\times\mathbb R^m\times \mathbb R^d)} \).
\end{definition}

From \cite[Corollary 2.6]{HLW}, it is know that $L^2(\mathbb R^{n+m+d})\cap H_{flag,w}^p(\mathbb R^n\times \mathbb R^m\times \mathbb R^d)$ is dense in $H_{flag,w}^p(\mathbb R^n\times \mathbb R^m\times \mathbb R^d)$. 

\section{Proofs of Theorems \ref{1f} and \ref{1f-3}}

Before we give the proof of Theorem \ref{1f}, we establish the following flag type almost
orthogonal estimates for the one-parameter singular integral kernels.
\begin{lemma}\label{orth2}
For all $z=(x,y)\in \mathbb R^n\times \mathbb R^m$, suppose that $\phi^{(1)}\in \mathcal S(\mathbb R^{n+m})$ with support in $B(0,2)$
and $\int_{\mathbb R^{n+m}}\phi^{(1)}(x,y)dxdy=0$ and $\phi^{(2)}\in \mathcal S(\mathbb R^{m})$ and $\int_{\mathbb R^{m}}\phi^{(2)}(y)dy=0$.
Then
\begin{align*}
|\phi_{j,k}\ast K\ast \phi_{j',k'}(x,y)|\lesssim 2^{-|j-j'|-|k-k'|}\frac{2^{(j\vee j')/2}}{(2^{j\vee j'}+|x|)^{n+1/2}} \frac{2^{(j\vee j')/2}\vee 2^{(k\vee k')/2}}{(2^{j\vee j'}\vee 2^{k\vee k'}+|y|)^{m+1/2}}.
\end{align*}

\end{lemma}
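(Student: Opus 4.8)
The plan is to prove the estimate by establishing the "flag" structure — a one-parameter bound in the first variable $x$ combined with a product-type bound linking the $x$ and $y$ scales — through a reduction that separates the roles of the two test functions. First I would write $\phi_{j,k} \ast K \ast \phi_{j',k'}(x,y)$ in integral form and exploit the cancellation $\int \phi^{(2)} = 0$ together with $\int \phi^{(1)} = 0$. Because $\phi^{(1)}$ is a Schwartz function on $\mathbb R^{n+m}$ with compact support and mean zero, and $\phi^{(2)}$ is a mean-zero Schwartz function on $\mathbb R^m$, the convolution $\phi_{j,k}$ carries cancellation in both the $x$ and $y$ directions (and $\phi_{j',k'}$ likewise). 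The strategy is the standard almost-orthogonality game: use the cancellation of the function on the smaller scale against the smoothness of $K$ (controlled by the $C^2$ derivative bounds of Definition \ref{1-2}) to pull out the decaying factors $2^{-|j-j'|}$ and $2^{-|k-k'|}$, and use the size/decay of the kernels together with the cancellation condition $\int_{\epsilon<|z|<N} K \le C$ to produce the spatial localization factors.

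The key steps, in order, would be: (1) By symmetry assume $j \ge j'$ and $k \ge k'$ (the other three cases are handled the same way after relabeling), so the "small scale" functions are $\phi_{j',k'}$ and the large scale is $2^{j} \vee 2^{k}$. (2) Split $K$ into an annular decomposition $K = \sum_{\ell} K_\ell$ with $K_\ell$ supported on $|z| \sim 2^\ell$, each piece inheriting the derivative bounds $|\partial^\alpha K_\ell| \lesssim 2^{-\ell(n+m+|\alpha|)}$ and a cancellation property from the truncated-integral hypothesis; this is the device that lets one treat $K$ like a smooth bump of a definite scale. (3) For each $\ell$, estimate $\phi_{j,k} \ast K_\ell \ast \phi_{j',k'}$: when $\ell$ is comparable to or larger than $j$, use the double cancellation of $\phi_{j',k'}$ (a Taylor expansion of $K_\ell$ to first order in each variable) to gain $2^{j'-\ell}$ and $2^{k'-\ell}$ — wait, more carefully, one uses cancellation of the coarser-scaled function against the finer-scaled one, giving after summing in $\ell$ the factors $2^{-(j-j')}$ and $2^{-(k-k')}$; when $\ell$ is much smaller than $j$, use the cancellation of $\phi_{j,k}$ and $\phi_{j',k'}$ against the slowly varying... again against the smoothness of the product $K_\ell \ast \phi_{j',k'}$. (4) Track the spatial variable: the support of $\phi_{j,k}$ in $B(0, 2^{j+1}) \times B(0, 2^{k+1})$ (roughly), combined with the decay of $K_\ell$, forces the $x$-localization $2^{j/2}(2^{j}+|x|)^{-n-1/2}$ and the $y$-localization $(2^{j}\vee 2^{k})^{1/2}(2^{j}\vee 2^{k}+|y|)^{-m-1/2}$ after summing the geometric series in $\ell$ (one does not get full decay $(\cdot)^{-n-m}$ but only the "$1/2$-order" flag decay because the cancellation is only being used once and the summation in $\ell$ costs some decay — this is exactly why the exponents $n+1/2$ and $m+1/2$ appear, matching $\tau = \frac{2n}{2n+1}\vee\frac{2m}{2m+1}$).

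The main obstacle will be step (4): correctly bookkeeping how the annular summation over $\ell$ interacts with the spatial decay so that one obtains precisely the claimed mixed-scale localization $\dfrac{2^{(j\vee j')/2}}{(2^{j\vee j'}+|x|)^{n+1/2}}\cdot\dfrac{2^{(j\vee j')/2}\vee 2^{(k\vee k')/2}}{(2^{j\vee j'}\vee 2^{k\vee k'}+|y|)^{m+1/2}}$ with the right powers, rather than a cruder bound. In particular, the coupling in the $y$-factor of the $x$-scale $2^{j\vee j'}$ (the hallmark of flag kernels versus product kernels) must come out of the fact that $K$ lives on the joint sphere $|z| = (|x|^2+|y|^2)^{1/2}$, so when $|x|$ is large the kernel already forces $|z|$ large and this propagates to the $y$-decay. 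A secondary technical point is justifying the annular decomposition of the principal-value kernel $K$ with uniformly controlled cancellation constants for each $K_\ell$; I would handle this by a smooth dyadic partition of unity away from the origin and absorbing the resulting boundary terms into the cancellation hypothesis $\int_{\epsilon<|z|<N}K\,dz \le C$. The remaining estimates — Taylor expansions, geometric summation, and the elementary inequality $\int (2^a+|x-u|)^{-n-1/2}(2^a + |u|)^{-n-1/2}\,du \lesssim 2^{a n}(2^a+|x|)^{-n-1/2}$ type convolution bounds — are routine and I would not grind through them.
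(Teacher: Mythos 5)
Your proposal is built around an annular decomposition $K=\sum_\ell K_\ell$ followed by Taylor expansion, and you yourself flag step~(4) — the bookkeeping of the $\ell$-sum against the spatial decay to recover the precise flag-shaped localization — as the main obstacle, without carrying it out. That is exactly where the proof lives, and your framework makes it harder rather than easier to reach. In particular, nothing in your outline explains structurally why the $x$-factor sees only the scale $2^{j\vee j'}$ while the $y$-factor sees $2^{j\vee j'}\vee 2^{k\vee k'}$; you gesture at ``the coupling $\ldots$ must come out of the fact that $K$ lives on the joint sphere,'' but this is a statement of what you want, not a mechanism that produces it.

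The paper's proof avoids decomposing $K$ altogether and instead exploits a structural identity that your proposal never invokes: writing $\phi_{j,k}=\phi_j^{(1)}\ast_2\phi_k^{(2)}$ (convolution only in the $y$-variable, with $\phi_j^{(1)}$ an $(n+m)$-dimensional bump and $\phi_k^{(2)}$ an $m$-dimensional bump) and then commuting convolutions so that
\[
\phi_{j,k}\ast K\ast\phi_{j',k'}=(\phi_j\ast K\ast\phi_{j'})\ast_2(\phi_k\ast\phi_{k'}).
\]
This reduces the lemma to two one-parameter almost-orthogonal estimates — $|\phi_j\ast K\ast\phi_{j'}(z)|\lesssim 2^{-|j-j'|}\,2^{j\vee j'}(2^{j\vee j'}+|z|)^{-(n+m+1)}$ on $\mathbb R^{n+m}$ (obtained from $|K\ast\phi^{(1)}|\lesssim(1+|z|)^{-(n+m+1)}$ by rescaling) and the classical estimate for $\phi_k\ast\phi_{k'}$ on $\mathbb R^m$ — and then a single one-variable integral in $z\in\mathbb R^m$ handled by a four-case analysis comparing $j\vee j'$ with $k\vee k'$ and $|y|$ with these scales. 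The asymmetric flag structure of the bound falls out of the fact that the $(n+m+1)$-power joint decay in $(|x|,|y|)$ is split between the two variables as $(n+1/2)+(m+1/2)$, and only the $y$-variable is then further convolved. Your approach might eventually be made to work, but as written it is a sketch of a plan whose hardest step is unresolved, while the paper's mechanism (the $\ast_2$-commutation followed by a one-variable integral split into cases) is the substance of the proof and is absent from your proposal.
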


\begin{proof}
Write $z=(x,y)\in\mathbb R^n\times\mathbb R^m$. First we observe that 
\begin{align}\label{3.2}
|K\ast \phi^{(1)}(z)|\lesssim \frac{1}{(1+|z|)^{n+m+1}},
\end{align}
and that
\begin{align}\label{3.3}
|\phi_{j}\ast \phi_{j'}(z)|\lesssim 2^{-|j-j'|}\frac{2^{j\vee j'}}{(2^{j\vee j'}+|z|)^{n+m+1}}.
\end{align}

By a dilation argument together with (\ref{3.2}) and (\ref{3.3}), we conclude that
\begin{align}\label{3.4}
|\phi_{j}\ast K\ast \phi_{j'}(z)|
=|K\ast \phi_{j}\ast \phi_{j'}(z)|
\lesssim 2^{-|j-j'|}\frac{2^{j\vee j'}}{(2^{j\vee j'}+|z|)^{n+m+1}}.
\end{align}

For any $y\in\mathbb R^m$ by applying the classical almost orthogonal estimate again we obtain that
\begin{align}\label{3.5}
|\phi_{k}\ast \phi_{k'}(y)|\lesssim 2^{-|k-k'|}
\frac{2^{k\vee k'}}{(2^{k\vee k'}+|y|)^{m+1}}.
\end{align}


Then by (\ref{3.4}) and (\ref{3.5}) we have
\begin{align*}
&\phi_{j,k}\ast K\ast \phi_{j',k'}(x,y)\\
&=(\phi_{j}\ast_2\phi_{k})\ast K
\ast_2 (\phi_{j'}\ast \phi_{k'})(x,y)\\
&=(\phi_{j}\ast K\ast \phi_{j'})\ast_2 (\phi_{k}\ast \phi_{k'})(x,y)\\
&\lesssim 2^{-|j-j'|}2^{-|k-k'|}\int_{\mathbb R^m}
\frac{2^{j\vee j'}}{(2^{j\vee j'}+(|x|^2+|y-z|^2)^{1/2})^{n+m+1}}
\frac{2^{k\vee k'}}{(2^{k\vee k'}+|z|)^{m+1}}dz\\
&\lesssim 2^{-|j-j'|}2^{-|k-k'|}\int_{\mathbb R^m}
\frac{2^{j\vee j'}}{(2^{j\vee j'}+(|x|+|y-z|))^{n+m+1}}
\frac{2^{k\vee k'}}{(2^{k\vee k'}+|z|)^{m+1}}dz.
\end{align*}

{\bf Case 1:} When $j\vee j' \le k\vee k'$ and $|y|\ge 2^{k\vee k'}$,
write 
\begin{align*}
&\int_{\mathbb R^m}
\frac{2^{j\vee j'}}{(2^{j\vee j'}+(|x|+|y-z|))^{n+m+1}}
\frac{2^{k\vee k'}}{(2^{k\vee k'}+|z|)^{m+1}}dz\\
=&\int_{|z|\le\frac{1}{2}\,\, {or}\,\, |z|\ge 2|y|}+\int_{\frac{1}{2}|y|\le |z|\le 2|y|}
\cdots=:I+II.
\end{align*}
For term $I$, we find that
\begin{align*}
I&\lesssim
\frac{2^{j\vee j'}}{(2^{j\vee j'}+(|x|+|y|))^{n+m+1}}\\
&\lesssim
\frac{2^{(j\vee j')/2}}{(2^{j\vee j'}+|x|)^{n+1/2}}
\frac{2^{(j\vee j')/2}}{|y|^{m+1/2}}\\
&\lesssim
\frac{2^{(j\vee j')/2}}{(2^{j\vee j'}+|x|)^{n+1/2}}
\frac{2^{(k\vee k')/2}}{(2^{k\vee k'}+|y|)^{m+1/2}}.
\end{align*}

Next we estimate the term $II$.
\begin{align*}
II&\lesssim \int_{\frac{1}{2}|y|\le |z|\le 2|y|}
\frac{2^{j\vee j'}}{(2^{j\vee j'}+(|x|+|y-z|))^{n+m+1}}
\frac{2^{k\vee k'}}{(2^{k\vee k'}+|y|)^{m+1}}dz\\
&\lesssim \frac{2^{k\vee k'}}{(2^{k\vee k'}+|y|)^{m+1}}
\int_{\mathbb R^m}
\frac{2^{j\vee j'}}{(2^{j\vee j'}+(|x|+|y-z|))^{n+m+1}}
dz\\
&\lesssim \frac{2^{k\vee k'}}{(2^{k\vee k'}+|y|)^{m+1}}
\frac{2^{j\vee j'}}{(2^{j\vee j'}+|x|)^{n+1}}\\
&\lesssim \frac{2^{(j\vee j')/2}}{(2^{j\vee j'}+|x|)^{n+1/2}}
\frac{2^{(k\vee k')/2}}{(2^{k\vee k'}+|y|)^{m+1/2}}.
\end{align*}

{\bf Case 2:} When $j\vee j' \le k\vee k'$ and $|y|\le 2^{k\vee k'}$,
then 
\begin{align*}
&\int_{\mathbb R^m}
\frac{2^{j\vee j'}}{(2^{j\vee j'}+(|x|+|y-z|))^{n+m+1}}
\frac{2^{k\vee k'}}{(2^{k\vee k'}+|z|)^{m+1}}dz\\
&\le\frac{1}{(2^{k\vee k'})^{m}}\int_{\mathbb R^m}
\frac{2^{j\vee j'}}{(2^{j\vee j'}+(|x|+|y-z|))^{n+m+1}}
dz\\
&\lesssim \frac{2^{k\vee k'}}{(2^{k\vee k'}+|y|)^{m+1}}
\frac{2^{j\vee j'}}{(2^{j\vee j'}+|x|)^{n+1}}\\
&\lesssim \frac{2^{(j\vee j')/2}}{(2^{j\vee j'}+|x|)^{n+1/2}}
\frac{2^{(k\vee k')/2}}{(2^{k\vee k'}+|y|)^{m+1/2}}.
\end{align*}

{\bf Case 3:} When $j\vee j' \ge k\vee k'$ and $|y|\le 2^{j\vee j'}$,
then 
\begin{align*}
&\int_{\mathbb R^m}
\frac{2^{j\vee j'}}{(2^{j\vee j'}+(|x|+|y-z|))^{n+m+1}}
\frac{2^{k\vee k'}}{(2^{k\vee k'}+|z|)^{m+1}}dz\\
&\le
\frac{2^{j\vee j'}}{(2^{j\vee j'}+|x|)^{n+m+1}}\\
&\lesssim \frac{2^{(j\vee j')/2}}{(2^{j\vee j'}+|x|)^{n+1/2}}
\frac{2^{(j\vee j')/2}}{(2^{j\vee j'}+|y|)^{m+1/2}}.
\end{align*}

{\bf Case 4:} When $j\vee j' \ge k\vee k'$ and $|y|\ge 2^{j\vee j'}$,
repeating similar argument in {Case 1}, we write 
\begin{align*}
&\int_{\mathbb R^m}
\frac{2^{j\vee j'}}{(2^{j\vee j'}+(|x|+|y-z|))^{n+m+1}}
\frac{2^{k\vee k'}}{(2^{k\vee k'}+|z|)^{m+1}}dz\\
=&\int_{|z|\le\frac{1}{2}\,\, {or}\,\, |z|\ge 2|y|}+\int_{\frac{1}{2}|y|\le |z|\le 2|y|}
\cdots=:I+II.
\end{align*}
For term $I$, we find that
\begin{align*}
I&\lesssim
\frac{2^{j\vee j'}}{(2^{j\vee j'}+(|x|+|y|))^{n+m+1}}\\
&\lesssim \frac{2^{(j\vee j')/2}}{(2^{j\vee j'}+|x|)^{n+1/2}}
\frac{2^{(j\vee j')/2}}{(2^{j\vee j'}+|y|)^{m+1/2}}.
\end{align*}

Next we estimate the term $II$.
\begin{align*}
II&\lesssim \int_{\frac{1}{2}|y|\le |z|\le 2|y|}
\frac{2^{j\vee j'}}{(2^{j\vee j'}+(|x|+|y-z|))^{n+m+1}}
\frac{2^{k\vee k'}}{(2^{k\vee k'}+|y|)^{m+1}}dz\\
&\lesssim \frac{2^{k\vee k'}}{(2^{k\vee k'}+|y|)^{m+1}}
\int_{\mathbb R^m}
\frac{2^{j\vee j'}}{(2^{j\vee j'}+(|x|+|y-z|))^{n+m+1}}
dz\\
&\lesssim \frac{2^{j\vee j'}}{(2^{j\vee j'}+|y|)^{m+1}}
\frac{2^{j\vee j'}}{(2^{j\vee j'}+|x|)^{n+1}}\\
&\lesssim \frac{2^{(j\vee j')/2}}{(2^{j\vee j'}+|x|)^{n+1/2}}
\frac{2^{(j\vee j')/2}}{(2^{j\vee j'}+|y|)^{m+1/2}}.
\end{align*}
Therefore, we have completed the proof of this lemma.
\end{proof}

To prove Theorem \ref{1f}, we also need the following discrete Calder\'on reproducing formula on \( L^2(\mathbb{R}^{n+m}) \). Precisely, set \( \phi^{(1)} \in C_0^\infty(\mathbb{R}^{n+m}) \) with
\[
\int_{\mathbb{R}^{n+m}} \phi^{(1)}(x, y) x^\alpha y^\beta \, dx \, dy = 0, \quad \text{for all } \alpha, \beta \text{ satisfying } 0 \leq |\alpha|,\,|\beta| \leq M_0,
\]
where \( M_0 \) is a large positive integer, and
\[
\sum_j |\widehat{\phi^{(1)}}(2^{-j} \xi_1, 2^{-j} \xi_2)|^2 = 1, \quad \text{for all } (\xi_1, \xi_2) \in \mathbb{R}^{n+m} \setminus \{(0, 0)\},
\]
and take \( \phi^{(2)} \in C_0^\infty(\mathbb{R}^m) \) with
\[
\int_{\mathbb{R}^m} \phi^{(2)}(z) z^\gamma \, dz = 0 \quad \text{for all } 0 \leq |\gamma| \leq M_0,
\]
and
\[
\sum_k |\widehat{\phi^{(2)}}(2^{-k} \xi_2)|^2 = 1 \quad \text{for all } \xi_2 \in \mathbb{R}^m \setminus \{0\}.
\]

Furthermore, we may assume that \( \phi^{(1)} \) and \( \phi^{(2)} \) are radial functions and supported in the unit balls of \( \mathbb{R}^{n+m} \) and \( \mathbb{R}^m \) respectively. Set again
\[
\phi_{j k}(x, y) = \int_{\mathbb{R}^m} \phi_j^{(1)}(x, y - z) \phi_k^{(2)}(z)dz.
\]

For our purpose, we recall the discrete type reproducing formula, whose proof are nearly identical to \cite[Theorem 4.4]{HL} and \cite[Theorem 3.3]{HLW}.

\begin{theorem}\label{dis}
There exist functions \( \tilde{\phi}_{j k} \) and an operator \(T_N^{-1}\) such that
\[
f(x, y) = \sum_{j,k}\sum_{I,J}|I||J|\tilde{\phi}_{j,k}(x-x_I, y-y_J) \phi_{j, k}\ast\left(T_N^{-1}(f) \right)(x_I, y_J),
\]
where 
$I\times J\in \mathcal R^{j-N,k-N}$ and
functions $\widetilde{\phi}_{j,k}\left(x-x_I, y-y_J\right)$ satisfy the same conditions as ${\phi_{j,k}}$ with $\alpha_1,$ $\beta_1,$ $\gamma_1, N, M$ depending on $M_0, x_0=x_I$ and $y_0=y_J$. Moreover, $T_N^{-1}$ is bounded on $L^2\left(\mathbb R^{n+m}\right)$ and $H_{flag,w}^p\left(\mathbb R^n \times \mathbb R^m\right)$, and the series converges in $L^2\left(\mathbb R^{n+m}\right)$.
\end{theorem}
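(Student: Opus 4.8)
The statement to prove is Theorem~\ref{dis}, the discrete Calder\'on reproducing formula on $L^2(\mathbb R^{n+m})$ with the flag structure, together with the $L^2$- and $H^p_{flag,w}$-boundedness of the correcting operator $T_N^{-1}$.

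The plan is to start from the continuous flag Calder\'on identity $f=\sum_{j,k}\phi_{j,k}\ast\phi_{j,k}\ast f$ on $L^2(\mathbb R^{n+m})$ (obtained by the Fourier transform from the two partition-of-unity conditions on $\phi^{(1)}$ and $\phi^{(2)}$), and to discretize each convolution by a Riemann-sum over the dyadic rectangles $I\times J\in\mathcal R^{j-N,k-N}$ at scale $2^{-N}$ finer than the natural scale. Writing
\[
f(x,y)=\sum_{j,k}\sum_{I,J}|I||J|\,\phi_{j,k}(x-x_I,y-y_J)\,\phi_{j,k}\ast f(x_I,y_J)+R_N(f),
\]
I would show that the remainder operator $R_N$ has small norm on $L^2$ when $N$ is large, hence $T_N:=I-R_N$ is invertible on $L^2$ with $T_N^{-1}=\sum_{\ell\ge0}R_N^\ell$; then $\tilde\phi_{j,k}:=\phi_{j,k}$ and the identity takes the stated form with $\phi_{j,k}\ast(T_N^{-1}f)(x_I,y_J)$ inside. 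The key analytic input for the $L^2$ bound on $R_N$ is the flag-type almost-orthogonality of Lemma~\ref{orth2} applied with $K$ replaced by $f$ itself (or rather, the almost-orthogonality between $\phi_{j,k}\ast\phi_{j',k'}$, which follows from \eqref{3.3}, \eqref{3.5} by exactly the argument of Lemma~\ref{orth2} without the kernel $K$), combined with the smoothness and cancellation of $\phi_{j,k}$ (controlled by the large integer $M_0$): the difference between the integral $\phi_{j,k}\ast f$ and its Riemann sum contributes an extra factor $2^{-N}$ (or $2^{-\varepsilon N}$) in each estimate, and a Cotlar--Stein / Schur-test argument on the matrix indexed by $(j,k),(j',k')$ sums these to give $\|R_N\|_{L^2\to L^2}\lesssim 2^{-\varepsilon N}$.

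For the $H^p_{flag,w}$-boundedness of $T_N^{-1}$, I would argue that it suffices to bound $R_N$ on $H^p_{flag,w}(\mathbb R^n\times\mathbb R^m)$ with norm $<1$, which by the square-function definition of $H^p_{flag,w}$ reduces to showing $\|g_{\mathcal F}(R_N f)\|_{L^p_w}\lesssim 2^{-\varepsilon N}\|g_{\mathcal F}(f)\|_{L^p_w}$. Expanding $\psi_J\ast R_N f(u_R)$ in terms of $\psi_J\ast f(u_{R'})$ via the discrete Calder\'on formula (Proposition~\ref{disc}, flag version) produces an almost-orthogonal kernel to which one applies the flag almost-orthogonality estimates, then sums the resulting geometric series in the scale differences using the Fefferman--Stein vector-valued maximal inequality on the product domain (Proposition~\ref{FS}) together with the $\ell^2$-to-$\ell^{\tau}$ trick governed by the condition $w\in A_{p/\tau}$; the factor $2^{-\varepsilon N}$ survives the summation. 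Thus for $N$ large $T_N^{-1}=\sum_{\ell}R_N^\ell$ converges in operator norm on both $L^2$ and $H^p_{flag,w}$.

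The main obstacle is the $H^p_{flag,w}$ estimate on $R_N$: one must carefully track the flag geometry (the parameter $2^{j\vee j'}$ in the $x$-variable versus $2^{j\vee j'}\vee 2^{k\vee k'}$ in the $y$-variable, as in Lemma~\ref{orth2}) through the discretization, verify that the remainder genuinely gains a power $2^{-\varepsilon N}$ uniformly in all indices after integrating the reproducing kernels against the square function, and that the summation against the strong maximal function respects the weight class $A_{p/\tau}$ for the full range $\tau<p\le1$. The $L^2$ part and the abstract Neumann-series inversion are routine once the almost-orthogonality with the extra $2^{-\varepsilon N}$ is in hand; I would cite \cite[Theorem 4.4]{HL} and \cite[Theorem 3.3]{HLW} for the detailed bookkeeping and only indicate the modifications needed to accommodate the weight.
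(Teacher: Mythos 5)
Your proposal reconstructs the standard discretization--plus--Neumann-series argument from Han--Lu and Han--Lin--Wu, which is exactly what the paper relies on: the paper does not prove Theorem~\ref{dis} at all but states that the proof is nearly identical to \cite[Theorem 4.4]{HL} and \cite[Theorem 3.3]{HLW}, and your outline (continuous Calder\'on identity $\to$ Riemann-sum discretization at scale $2^{-N}$ $\to$ almost-orthogonality gives $\|R_N\|_{L^2\to L^2}\lesssim 2^{-\varepsilon N}$ and $\|R_N\|_{H^p_{flag,w}\to H^p_{flag,w}}\lesssim 2^{-\varepsilon N}$ $\to$ $T_N^{-1}=\sum_\ell R_N^\ell$) is precisely the argument carried out there, so you are on the same route as the paper. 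One small formulation discrepancy: you set $\tilde\phi_{j,k}:=\phi_{j,k}$ and keep the full correction in $T_N^{-1}$, whereas the theorem as stated keeps modified kernels $\tilde\phi_{j,k}$ that depend on $x_I,y_J$ and \emph{also} applies $T_N^{-1}$ to $f$; the two forms are equivalent reproducing identities, but you should acknowledge that in the cited construction the $\tilde\phi$ arise by pushing (part of) $T_N^{-1}$ onto the analysis kernels, not by declaring them equal to $\phi_{j,k}$.
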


The following lemma in \cite[Lemma 2.3]{CHW} also plays a key role in the proof of Theorem \ref{1f}.
\begin{lemma}\label{max}
Let \( I, I', J, J' \) be dyadic cubes in \( \mathbb{R}^n \) and \( \mathbb{R}^m \) respectively such that
$
\ell(I) = 2^{j}, \ell(J) = 2^{j\vee k}, \ell(I') = 2^{j'}\, \text{and}\; \ell(J') = 2^{j'\vee k'}.
$
Thus for any \( u, u' \in I\) and \( v, v' \in J \), we have
\begin{align*}
&\sum_{I', J'} \frac{2^{(j\vee j' + j\vee j' \vee k\vee k')/2} |I'| |J'| |\phi_{j', k'} \ast f(x_{I'}, y_{J'})|}{(2^{j\vee j'} + |u' - x_{I'}|)^{n+1/2} (2^{j\vee j' \vee k\vee k'} + |v' - y_{J'}|)^{m+1/2}}\\
 &\leq C_1\left\{ M_s \left( \sum_{J'} \sum_{I'} |\phi_{j', k'}\ast f(x_{I'}, y_{J'})| \chi_{J'} \chi_{I'} \right)^r \right\}^{\frac{1}{r}} (u, v)
\end{align*}
where \( M_s \) is the strong maximal function on \( \mathbb{R}^n \times \mathbb{R}^m \), \(  \frac{2n}{2n+1}\vee \frac{2m}{2m+1}  < r\le 1 \) and $C_1=2^{[n(j\vee j'-j')+m(j\vee j'\vee k\vee k'-j'\vee k')](\frac{1}{r}-1)}$.
\end{lemma}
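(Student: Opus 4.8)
The plan is to prove Lemma~\ref{max} by reducing it to the standard Fefferman--Stein type estimate, using the geometric decay provided by the extra side-lengths of $J$ and $J'$ relative to $I$ and $I'$. First I would fix $u,v,u',v'$ as in the statement and observe that since $\ell(I)=2^j$ and $\ell(I')=2^{j'}$, for $x_{I'}$ ranging over the dyadic cubes of generation $j'$ the quantities $|u'-x_{I'}|$ and $|u-x_{I'}|$ differ by at most $C2^{j\vee j'}$; the same holds for $v'-y_{J'}$ with the larger scale $2^{j\vee j'\vee k\vee k'}$. Hence the kernel factors in the sum are, up to a constant, comparable whether evaluated at $(u',v')$ or at the fixed reference point, so it suffices to bound the sum with $u',v'$ replaced by $u,v$.

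Next I would split the sum over $I'$ (and independently over $J'$) into annular pieces: for each integer $\ell\ge0$, let $\mathcal A_\ell$ be the set of $I'$ with $2^{\ell-1}2^{j\vee j'}\le |u-x_{I'}|+2^{j\vee j'}< 2^{\ell}2^{j\vee j'}$, and similarly decompose the $J'$-sum by the scale $2^{j\vee j'\vee k\vee k'}$. On the $\ell$-th annulus for $I'$, the kernel factor $(2^{j\vee j'}+|u-x_{I'}|)^{-(n+1/2)}$ is $\sim 2^{-\ell(n+1/2)}(2^{j\vee j'})^{-(n+1/2)}$, and the union of those $I'$ (which have side $2^{j'}\le 2^{j\vee j'}$) is contained in a ball of radius $\sim 2^{\ell}2^{j\vee j'}$ centred near $u$. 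Inserting the numerator factor $2^{(j\vee j')/2}$ and summing the corresponding $|I'|$-weighted averages reproduces, on each annulus, an average of $\sum_{I'}|\phi_{j',k'}\ast f(x_{I'},y_{J'})|\chi_{I'}$ over a rectangle containing $(u,v)$, i.e.\ a piece of $M_s$ of that function; the surplus power $2^{-\ell\cdot 1/2}$ in each variable makes the geometric series in $\ell$ converge. To convert the resulting $\ell^1$-type bound into the $\ell^r$ (maximal) bound claimed, I would use the elementary inequality $\big(\sum_\nu a_\nu\big)^r\le\sum_\nu a_\nu^r$ for $0<r\le1$ together with the condition $\frac{2n}{2n+1}\vee\frac{2m}{2m+1}<r\le1$, which is exactly what guarantees $r(n+1/2)>n$ and $r(m+1/2)>m$, so the geometric sums over $\ell$ still converge after the $r$-th power is taken inside. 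The constant $C_1=2^{[n(j\vee j'-j')+m(j\vee j'\vee k\vee k'-j'\vee k')](\frac1r-1)}$ arises from comparing the normalization $|I'||J'|$ at scale $(2^{j'},2^{j'\vee k'})$ with averages over rectangles at the coarser scales $(2^{j\vee j'},2^{j\vee j'\vee k\vee k'})$ dictated by the kernel.

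The main obstacle I anticipate is the bookkeeping of the two independent scales: the $x$-variable is governed by $2^{j\vee j'}$ while the $y$-variable is governed by the strictly larger (or equal) scale $2^{j\vee j'\vee k\vee k'}$, so when I replace a sum over fine cubes $I'\times J'$ by an average over a coarse rectangle $R\ni(u,v)$, I must check that $R$ has side-lengths exactly $\big(2^{\ell_1}2^{j\vee j'},\,2^{\ell_2}2^{j\vee j'\vee k\vee k'}\big)$ and that the passage produces precisely the factor $C_1$ and no extra loss. Once the two variables are handled in parallel by the same annular decomposition, Fubini reduces the double sum to a product of two one-parameter estimates, each of which is the classical computation behind the vector-valued maximal bound; combining them and applying Proposition~\ref{FS} (or rather just the pointwise domination by $M_s$ used here) completes the proof.
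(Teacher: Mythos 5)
The paper does not prove this lemma: it is quoted verbatim from \cite[Lemma~2.3]{CHW}, so there is no in-paper argument to compare against. Your reconstruction is the standard Frazier--Jawerth-type maximal domination and is essentially correct: replace $(u',v')$ by $(u,v)$ using $\ell(I)=2^j\le 2^{j\vee j'}$ and $\ell(J)=2^{j\vee k}\le 2^{j\vee j'\vee k\vee k'}$; decompose the $I'$-sum and $J'$-sum into annuli at the coarse scales $2^{j\vee j'}$ and $2^{j\vee j'\vee k\vee k'}$; on each annulus pair apply $(\sum a_\nu)^r\le\sum a_\nu^r$, compare the resulting $r$-sum to a rectangle average of $\sum|a_{I'J'}|^r\chi_{I'}\chi_{J'}$, and sum the geometric tails, which converge exactly when $r(n+1/2)>n$ and $r(m+1/2)>m$, i.e.\ $r>\frac{2n}{2n+1}\vee\frac{2m}{2m+1}$. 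The constant $C_1$ drops out of the scale mismatch between $|I'||J'|=2^{j'n+(j'\vee k')m}$ and the coarse rectangle volume, after combining with the prefactor $2^{-(j\vee j'-j')n-(j\vee j'\vee k\vee k'-j'\vee k')m}$ that appears when you normalize the denominator by $(2^{j\vee j'})^{n+1/2}(2^{j\vee j'\vee k\vee k'})^{m+1/2}$.

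One point of logic should be sharpened. You describe first obtaining ``an average of $\sum_{I'}|\phi_{j',k'}\ast f|\chi_{I'}$'' (an $\ell^1$-type bound via $M_s$) on each annulus and then ``converting'' it to the $\ell^r$/maximal form. That ordering does not work: a direct annular bound lands you at $M_s\bigl(\sum|a|\chi\bigr)(u,v)$ with a bounded constant, and there is no way to convert this into $C_1\{M_s((\sum|a|\chi)^r)\}^{1/r}$ afterwards, since $\{M_s(g^r)\}^{1/r}\le M_s(g)$ for $r\le1$ so the claimed bound is genuinely sharper in form. The $r$-power inequality $(\sum a_\nu)^r\le\sum a_\nu^r$ must be applied to the sum over cubes inside each annulus \emph{before} comparing to a rectangle average; it is precisely at this step that the ratio between the fine cube volume $|I'||J'|$ and the coarse rectangle volume produces the factor $2^{[n(j\vee j'-j')+m(j\vee j'\vee k\vee k'-j'\vee k')]/r}$, which combined with the $2^{-1}$-power prefactor gives $C_1$. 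Apart from this sequencing, your plan has all the correct ingredients and produces the correct constant and range of $r$.
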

\noindent{\bf Proof of Theorem \ref{1f}}\quad
By using the fact that $L^2(\mathbb R^{n+m})\cap H_{flag,w}^p(\mathbb R^n\times \mathbb R^m)$ is dense in $H_{flag,w}^p(\mathbb R^n\times \mathbb R^m)$, to show this theorem we only need to prove that for every
$f\in L^2(\mathbb R^{n+m})\cap H_{flag,w}^p(\mathbb R^n\times \mathbb R^m)$,
$$
\left\| \left( \sum_{j,k} \sum_{I,J} |\phi_{j,k}\ast T(f)(x_I,y_J)|^2 \chi_I\chi_J \right)^{\frac{1}{2}}\right\|_{L_w^p(\mathbb R^n\times\mathbb R^m)}\lesssim \|f\|_{ H_{flag,w}^p(\mathbb R^n\times \mathbb R^m)}.
$$

To prove it, applying Theorem \ref{dis} for $f$ in $\phi_{j,k}\ast T(f)(x_I,y_J)$ we conclude that
\begin{align*}
&\phi_{j,k}\ast Tf(x_I, y_J)\\
&=\phi_{j,k}\ast K\ast\left(\sum_{j',k'}\sum_{I',J'}|I'||J'|\tilde{\phi}_{j', k'}(x_I-x_{I'}, y_J-y_{J'}) \phi_{j', k'}\ast\left(T_N^{-1}(f) \right)(x_{I'}, y_{J'})\right)\\
&=\sum_{j',k'}\sum_{I',J'}|I'||J'|\phi_{j,k}\ast K\ast\tilde{\phi}_{j', k'}(x_I-x_{I'}, y_J-y_{J'}) \phi_{j', k'}\ast\left(T_N^{-1}(f) \right)(x_{I'}, y_{J'}).
\end{align*}

Then by Lemma \ref{orth2} and Lemma \ref{max}, for any $u\in I$ and $v\in J$ we have
\begin{align*}
&\phi_{j,k}\ast Tf(x_I, y_J)\\
\lesssim& \sum_{j',k'}2^{-|j-j'|-|k-k'|}\sum_{I',J'}|I'||J'|\frac{2^{(j\vee j')/2}}{(2^{j\vee j'}+|x_I-x_{I'}|)^{n+1/2}}\\&\times \frac{2^{(j\vee j')/2}\vee 2^{(k\vee k')/2}}{(2^{j\vee j'}\vee 2^{k\vee k'}+|y_J-y_{J'}|)^{m+1/2}}
\phi_{j', k'}\ast\left(T_N^{-1}(f) \right)(x_{I'}, y_{J'})\\
\lesssim& \sum_{j',k'}2^{-|j-j'|-|k-k'|}
C_1\left\{ M_s \left( \sum_{J'} \sum_{I'} |\phi_{j', k'}\ast T_N^{-1}(f)(x_{I'}, y_{J'})| \chi_{J'} \chi_{I'} \right)^r \right\}^{\frac{1}{r}} (u, v),
\end{align*}
where $C_1=2^{[n(j\vee j'-j')+m(j\vee j'\vee k\vee k'-j'\vee k')](\frac{1}{r}-1)}$ and
$\frac{2n}{2n+1}\vee\frac{2m}{2m+1}<r\le 1$.

Then with the help of the H\"older inequality, we deduce that
\begin{align*}
&\left\| \left( \sum_{j,k} \sum_{I,J} |\phi_{j,k}\ast T(f)(x_I,y_J)|^2 \chi_I\chi_J \right)^{\frac{1}{2}}\right\|_{L_w^p(\mathbb R^n\times\mathbb R^m)}\\
&\lesssim \Bigg\| \Bigg( \sum_{j,k} \sum_{I,J} \Bigg|\sum_{j',k'}2^{-|j-j'|-|k-k'|}
2^{[n(j\vee j'-j')+m(j\vee j'\vee k\vee k'-j'\vee k')](\frac{1}{r}-1)}\\
&\Bigg\{ M_s \left( \sum_{J'} \sum_{I'} |\phi_{j', k'}\ast T_N^{-1}(f)(x_{I'}, y_{J'})| \chi_{J'} \chi_{I'} \right)^r \Bigg\}^{\frac{1}{r}}\Bigg|^2 \chi_I\chi_J \Bigg)^{\frac{1}{2}}\Bigg\|_{L_w^p(\mathbb R^n\times\mathbb R^m)}\\
&\lesssim \Bigg\| \Bigg( \sum_{j,k} \sum_{I,J} \sum_{j',k'}2^{-|j-j'|-|k-k'|}
2^{[n(j\vee j'-j')+m(j\vee j'\vee k\vee k'-j'\vee k')](\frac{1}{r}-1)}\\
&\Bigg\{ M_s \left( \sum_{J'} \sum_{I'} |\phi_{j', k'}\ast T_N^{-1}(f)(x_{I'}, y_{J'})| \chi_{J'} \chi_{I'} \right)^r \Bigg\}^{\frac{2}{r}}\chi_I\chi_J \Bigg)^{\frac{1}{2}}\Bigg\|_{L_w^p(\mathbb R^n\times\mathbb R^m)}.
\end{align*}

Denote $\tau=\frac{2n}{2n+1}\vee\frac{2m}{2m+1}.$
 If $w\in A_{p/\tau}(\mathbb R^n\times \mathbb R^m)$, then there exist $\tau<r\le1$ such that
 $w\in A_{p/r}(\mathbb R^n\times \mathbb R^m)$.
Hence,
summing over $Q=I\times J$
and applying the Fefferman--Stein vector valued maximal inequality
in Proposition \ref{FS} yield that
\begin{align*}
&\left\| \left( \sum_{j,k} \sum_{I,J} |\phi_{j,k}\ast T(f)(x_I,y_J)|^2 \chi_I\chi_J \right)^{\frac{1}{2}}\right\|_{L_w^p(\mathbb R^n\times\mathbb R^m)}\\
&\lesssim \Bigg\| \Bigg( \sum_{j',k'}
\Bigg\{ M_s \left( \sum_{J'} \sum_{I'} |\phi_{j', k'}\ast T_N^{-1}(f)(x_{I'}, y_{J'})| \chi_{J'} \chi_{I'} \right)^r \Bigg\}^{\frac{2}{r}}\Bigg)^{\frac{1}{2}}\Bigg\|_{L_w^p(\mathbb R^n\times\mathbb R^m)}\\
&\lesssim \Bigg\| \Bigg( \sum_{j',k'}
\sum_{I',J'} |\phi_{j', k'}\ast T_N^{-1}(f)(x_{I'}, y_{J'})|^2 \chi_{I',J'} \chi_{I'}\Bigg)^{\frac{1}{2}}\Bigg\|_{L_w^p(\mathbb R^n\times\mathbb R^m)}\\
&\lesssim \|T_N^{-1}(f)\|_{ H_{flag,w}^p(\mathbb R^n\times \mathbb R^m)}\\
&\lesssim \|f\|_{ H_{flag,w}^p(\mathbb R^n\times \mathbb R^m)},
\end{align*}
where the last inequality follows from $T_N^{-1}$ is bounded on $H_{flag,w}^p\left(\mathbb R^n \times \mathbb R^m\right)$.
Thus, we have completed the proof of Theorem \ref{1f}.
\hfill $\square$

\noindent{\bf Proof of Theorem \ref{1f-3}}\quad
The proof is essentially identical to the proof of Theorem \ref{1f}. First we need the following almost orthogonal estimates.
Let $N_1=n+m+d, N_2=m+d$ and $N_3=d$. For $i=1,2,3$, let $\phi^{(i)} \in \mathcal{S}\left(\mathbb{R}^{N_i}\right)$ with support in $B(0,2)$
and $\int_{\mathbb R^{N_i}}\phi^{(i)}(x^i)dx^i=0$,
where $x^1=(x_1,x_2,x_3)$, $x^2=(x_2,x_3)$ and $x^3=x_3$.
For $J=\left(j_1, j_2, j_3\right) \in \mathbb{Z}^3$ and $J'=\left(j'_1, j'_2, j'_3\right) \in \mathbb{Z}^3$
and for all $u=(x_1,x_2,x_3)\in \mathbb R^n\times \mathbb R^m\times R^d$, then 
repeating the nearly same as the proof of Lemma \ref{orth2}
yields that
\begin{align*}
&|\phi_{J}\ast K\ast \phi_{J'}(u)|\lesssim 2^{-|j_1-j'_1|-|j_2-j'_2|-|j_3-j'_3|}\frac{2^{(j_1\vee j_1')/3}}{(2^{j_1\vee j'_1}+|x_1|)^{n+1/3}}\\
&\qquad\qquad\times\frac{2^{(j_1\vee j'_1)/3}\vee 2^{(j_2\vee j'_2)/3}}{(2^{j_1\vee j'_1}\vee 2^{j_2\vee j'_2}+|x_2|)^{m+1/3}}
\frac{2^{(j_1\vee j'_1)/3}\vee 2^{(j_2\vee j'_2)/3}\vee 2^{(j_3\vee j'_3)/3}}{(2^{j_1\vee j'_1}\vee 2^{j_2\vee j'_2}\vee 2^{(j_3\vee j'_3)}+|x_3|)^{d+1/3}}.
\end{align*}
 The rest of the proof is identical by applying the discrete type reproducing formula, H\"older's inequality
 along with the Fefferman–Stein vector valued maximal inequality.
\hfill $\square$

\section{Proofs of Theorems \ref{1p} and \ref{1p-3}}
The proof of Theorem \ref{1p} is similar but easier, so we only need to prove Theorem \ref{1p-3}.
To prove it, we need to define $\phi^{(1)}$, $\phi^{(2)}$ and $\phi^{(3)}$ to be the same as in (\ref{1.1})-(\ref{1.3}) with additional conditions that they are Schwartz functions supported in the unit balls centered at the origin in $\mathbb R^{n+m+d}$
and fulfilling the moment conditions
\begin{equation*}
\int_{\mathbb{R}^n} \phi^{(1)}(x)x^{\alpha} dx = \int_{\mathbb{R}^m} \phi^{(2)}(y)y^{\beta} dy = \int_{\mathbb{R}^d} \phi^{(3)}(z)z^{\gamma} dz = 0
\end{equation*}
for all multi-indices \( |\alpha|, |\beta|, |\gamma| \le M\), where $M$ is a fixed large positive integer. Denote
\[
\phi_{j,k,\ell}(x,y,z) = 2^{-jn-km-\ell d} \phi^{(1)}(2^{-j} x) \phi^{(2)}(2^{-k}y)  \phi^{(3)}(2^{-\ell}z),
\]
where
\[
\phi_j^{(1)}(x) = 2^{-jn} \phi^{(1)}(2^{-j} x), \quad \phi_k^{(2)}(y) = 2^{-km} \phi^{(2)}(2^{-k} y), \quad \phi_{\ell}^{(3)}(z) = 2^{-\ell d} \phi^{(3)}(2^{-\ell} z).
\]
First we obtain the following three-parameter almost orthogonal
estimates for the one-parameter singular integral kernels.

\begin{lemma}\label{ortho3} 
Suppose that $\phi$ is defined as above and $K$ is a one-parameter singular integral kernel in Definition \ref{1-3}. Then
\begin{align*}
&\left|\phi_{j,k,\ell} \ast {K} \ast \phi_{j',k',\ell'} (x, y, z) \right|\\
&\lesssim  2^{-|j-j'|}2^{-|k-k'|}2^{-|\ell-\ell'|}
\frac{2^{-(j \vee j')n}}{(1 + 2^{-(j \vee j')} |x|)^{n + 1}}
\frac{2^{-(k \vee k')m}}{(1 + 2^{-(k \vee k')} |y|)^{m + 1}}
\frac{2^{-(\ell \vee \ell')d}}{(1 + 2^{-(\ell \vee \ell')} |z|)^{d + 1}},
\end{align*}
for all $(x,y,z)\in \mathbb R^n\times\mathbb R^m\times \mathbb R^d$.
\end{lemma}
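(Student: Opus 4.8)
\textbf{Proof proposal for Lemma \ref{ortho3}.}

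The plan is to exploit the product structure of the test functions $\phi_{j,k,\ell}$ and $\phi_{j',k',\ell'}$ together with the fact that $K$ is a \emph{one-parameter} kernel, reducing the estimate to a scalar almost-orthogonality bound obtained by the standard cancellation-plus-decay argument. First I would write $\phi_{j,k,\ell} = \phi_j^{(1)} \otimes \phi_k^{(2)} \otimes \phi_\ell^{(3)}$ and similarly for the primed family, and observe that the convolution $\phi_{j,k,\ell} \ast K \ast \phi_{j',k',\ell'}$ can be regrouped, in the spirit of the computation in Lemma \ref{orth2}, so that $K$ is first convolved against the product bump $\phi_{j,k,\ell} \ast \phi_{j',k',\ell'}$ on the full space $\mathbb R^{n+m+d}$. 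Because $K$ obeys the isotropic size and smoothness bounds $|\partial_z^\alpha K(z)| \lesssim |z|^{-(n+m+d+|\alpha|)}$ for $|\alpha|\le 3$ and the truncated integrability condition, the one-parameter almost-orthogonality lemma (the $\mathbb R^{n+m+d}$ analogue of \eqref{3.4}, proved by moving derivatives onto the bump functions and using their moment vanishing) gives
\[
|\phi_{j,k,\ell}\ast K\ast \phi_{j',k',\ell'}(u)| \lesssim 2^{-|j-j'|-|k-k'|-|\ell-\ell'|}\,\frac{2^{-(s\wedge s')(n+m+d)}}{(1+2^{-(s\wedge s')}|u|)^{n+m+d+1}},
\]
wait — more carefully, one first needs a \emph{scale-matched} version; the genuinely useful step is the next one.

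The key step is to convert this isotropic bound, which only sees the diagonal scale, into the \emph{anisotropic} tensor-product bound claimed in the statement. For this I would use the separate cancellation available in each factor: since $\phi^{(2)}$ and $\phi^{(3)}$ have vanishing moments of all orders up to $M$ in their own variables, after pulling $K$ past the $x$-variable factors (as in the regrouping $(\phi_j^{(1)}\ast_1 K\ast_1 \phi_{j'}^{(1)})\ast_{2}(\phi_k^{(2)}\ast_2\phi_{k'}^{(2)})\ast_3(\phi_\ell^{(3)}\ast_3\phi_{\ell'}^{(3)})$) one estimates the remaining kernel by iterating the classical one-dimensional-in-each-group almost-orthogonal estimate, exactly as \eqref{3.5} was used in Lemma \ref{orth2}, but now in three successive groups of variables. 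Each application peels off a factor $2^{-|k-k'|}\frac{2^{-(k\vee k')m}}{(1+2^{-(k\vee k')}|y|)^{m+1}}$ (resp. the $\ell,z$ version), while convolving the kernel in the remaining variables integrates harmlessly against the tails, just as in Cases 1–4 of the proof of Lemma \ref{orth2}. The largest scale $2^{j\vee j'}$ in the $x$-variable then dominates all cross terms, which is why the exponents appearing on the right-hand side are governed by $j\vee j'$, $k\vee k'$, $\ell\vee\ell'$ separately rather than by a single diagonal scale.

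Concretely, I would organize the argument as: (1) regroup the convolution and record the scalar bound $|\phi^{(i)}_{j_i}\ast\phi^{(i)}_{j_i'}(t)|\lesssim 2^{-|j_i-j_i'|}\frac{2^{-(j_i\vee j_i')N}}{(1+2^{-(j_i\vee j_i')}|t|)^{N+1}}$ in each of the three variable groups, with $N$ as large as $M$ permits; (2) establish the full-space estimate $|K\ast\phi^{(1)}_{j}\ast\phi^{(1)}_{j'}(z)|\lesssim 2^{-|j-j'|}\frac{2^{-(j\vee j')(n+m+d)}}{(1+2^{-(j\vee j')}|z|)^{n+m+d+1}}$ by transferring two derivatives onto the bump and using the kernel bounds for $|\alpha|\le 3$ and the truncated cancellation; (3) insert this into the regrouped product, carry out the $z$-integration against the $\phi^{(3)}$-factor and then the $y$-integration against the $\phi^{(2)}$-factor, splitting each integral into the regions $|z|$ small-or-large versus comparable to the relevant variable exactly as in Cases 1–4 of Lemma \ref{orth2}; and (4) collect the three decay factors and the geometric gains $2^{-|j-j'|-|k-k'|-|\ell-\ell'|}$. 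I expect the main obstacle to be bookkeeping in step (3): unlike the two-variable case of Lemma \ref{orth2}, here one has three nested convolutions and must track which of $2^{j\vee j'}$, $2^{k\vee k'}$, $2^{\ell\vee\ell'}$ is largest in each subcase, but since the claimed right-hand side already uses each $\vee$-scale independently (no flag-type coupling between the groups appears, because $K$ is one-parameter and the $\phi^{(i)}$ are true tensor factors), the estimate is in fact more symmetric than the flag estimate of Lemma \ref{orth2} and each variable group can be handled in isolation once the full-space bound in step (2) is in hand.
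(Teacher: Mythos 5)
Your high-level plan — regroup as $K$ convolved with the product bump, establish a decay estimate for $K\ast(\text{bump})$, and then tensorize — does match the overall shape of the paper's argument, and you correctly anticipate that the final bound must carry three \emph{independent} scales $j\vee j'$, $k\vee k'$, $\ell\vee\ell'$ with no flag-type coupling. However, the proposal contains a genuine gap at the step that is actually the heart of the lemma.

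Your step (2) imports the structure of Lemma~\ref{orth2} in a way that does not transfer. In Lemma~\ref{orth2} the function $\phi^{(1)}$ lives on the \emph{full} space $\mathbb R^{n+m}$ (that is the flag structure), so $\phi^{(1)}_j\ast K\ast\phi^{(1)}_{j'}$ there is a genuine one-parameter almost-orthogonality estimate on $\mathbb R^{n+m}$ — that is what makes the ``peel one variable at a time'' strategy of Cases 1--4 viable. In Lemma~\ref{ortho3}, $\phi^{(1)}$ is a bump on $\mathbb R^{n}$ only, so $\phi^{(1)}_j\ast_1 K\ast_1\phi^{(1)}_{j'}$ is a convolution in the $x$-variable alone with $(y,z)$ frozen; this partially smoothed kernel is still singular in $(y,z)$ (e.g.\ it blows up like $(|y|+|z|)^{-(m+d)}$ when $(y,z)\to 0$), so the display you propose for step (2), with a single diagonal scale $j\vee j'$ governing the full variable, is not true and cannot serve as the input to the subsequent $y$- and $z$-integrations. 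The later assertion that ``no flag-type coupling appears'' is indeed the fact one must prove, but your proposal gives no mechanism for it — iterating Cases 1--4 of Lemma~\ref{orth2} literally produces a coupled bound like $2^{j\vee j'}\vee 2^{k\vee k'}$, not the product one.

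What the paper actually does is establish the core estimate directly for the three-scale bump: it proves
$|K\ast\phi_{j,k,\ell}(x,y,z)|\lesssim \frac{2^{-jn}}{(1+2^{-j}|x|)^{n+1}}\frac{2^{-km}}{(1+2^{-k}|y|)^{m+1}}\frac{2^{-\ell d}}{(1+2^{-\ell}|z|)^{d+1}}$
by an eight-case analysis according to which of $|x|\gtrless 3\cdot 2^j$, $|y|\gtrless 3\cdot 2^k$, $|z|\gtrless 3\cdot 2^\ell$ hold. In the ``large'' directions one uses the vanishing mean of the corresponding $\phi^{(i)}$ to introduce finite differences, then the mean-value theorem and the $|\alpha|\le 3$ size/smoothness bounds on $K$ (one derivative per large variable); in the ``small'' directions one simply integrates the tail, using $\int_{\mathbb R^d}(|a|+|z|)^{-(d+\kappa)}\,dz\lesssim |a|^{-\kappa}$; and in the all-small case one uses both the truncated cancellation condition of $K$ and the boundedness of $\widehat K$. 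This case analysis, which you would need in place of your step (2), is precisely what decouples the three scales and yields the product — rather than flag — decay. Once this is in hand, combining with the classical bump-bump estimate $|\phi_{j,k,\ell}\ast\phi_{j',k',\ell'}|$ and the regrouping $\phi_{j,k,\ell}\ast K\ast\phi_{j',k',\ell'}=K\ast(\phi_{j,k,\ell}\ast\phi_{j',k',\ell'})$ gives the stated bound, as your step (4) anticipates.
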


\begin{proof}
First we prove that
\begin{align*}
\left|{K} \ast \phi_{j,k,\ell} (x, y, z) \right|
\lesssim 
\frac{2^{-jn}}{(1 + 2^{-j} |x|)^{n + 1}}
\frac{2^{-km}}{(1 + 2^{-k} |y|)^{m + 1}}
\frac{2^{-\ell d}}{(1 + 2^{-\ell} |z|)^{d + 1}}.
\end{align*}
To end it, we prove it in eight cases as follows.

{\bf Case 1:} $|x| \geq 3\cdot2^{j}, |y| \geq 3\cdot2^{k}, |z| \geq 3\cdot2^{\ell}$. By the cancellation conditions of $\phi^{(i)}$, we rewrite
\begin{align*}
&{K}\ast\phi_{j,k,\ell}(x, y, z)\\
=& 2^{-jn-km-\ell d}\iiint \big[{K}(x-x_1, y-y_1, z-z_1)\\
&-{K}(x, y-y_1, z-z_1)-{K}(x-x_1, y, z-z_1)-{K}(x-x_1, y-y_1, z)\\
&+{K}(x, y, z-z_1)+{K}(x, y-y_1, z)+{K}(x-x_1, y, z)\\
&-{K}(x, y, z)\big] \phi^{(1)}(2^{-j}u_1) \phi^{(2)}(2^{-k}u_2)
\phi^{(3)}(2^{-\ell}u_3)dx_1dy_1 dz_1\\
=&2^{-jn-km-\ell d}\iiint \left\{\int_0^1\int_0^1\int_0^1 \partial^{1}_s\partial^{1}_t \partial^{1}_u 
K(x-sx_1,y-ty_1,z-uz_1)dsdtdu\right\}\\
=&-2^{-jn-km-\ell d}\iiint \bigg\{\int_0^1\int_0^1\int_0^1 \sum_i^n\sum_j^m\sum_\ell^d x_{1i}y_{1j}z_{1d} \partial^{1}_{xi}
\partial^{1}_{yj}\partial^{1}_{z\ell}\\
&\times K(x-sx_1,y-ty_1,z-uz_1)dsdtdu\bigg\}
\phi^{(1)}(2^{-j}x_1) \phi^{(2)}(2^{-k}y_1)
\phi^{(3)}(2^{-\ell}z_1)dx_1dy_1 dz_1.
\end{align*}
Then applying the regularity condition of $K$ yields that
\begin{align*}
&|{K}\ast\phi_{j,k,\ell}(x, y, z)| \\
&\lesssim 2^{-jn-km-\ell d}\int_{|x_1| \leq 2^{j}} \int_{|y_1| \leq 2^{k}} \int_{|z_1| \leq 2^{\ell}} 
\frac{|x_1| |y_1| |z_1|}{(|x|+|y|+|z|)^{n+m+d+3}}\\
&\quad\times
\phi^{(1)}(2^{-j}x_1) \phi^{(2)}(2^{-k}y_1)
\phi^{(3)}(2^{-\ell}z_1)dx_1dy_1 dz_1\\
&\lesssim \frac{2^{-jn-km-\ell d}}{|2^{-j}x|^{n+1} |2^{-k}y|^{m+1} 
|2^{-\ell}z|^{d+1}} \\
&\lesssim\frac{2^{-jn}}{(1 + 2^{-j} |x|)^{n + 1}}
\frac{2^{-km}}{(1 + 2^{-k} |y|)^{m + 1}}
\frac{2^{-\ell d}}{(1 + 2^{-\ell} |z|)^{d + 1}}.
\end{align*}

{\bf Case 2:} $|x| \geq 3\cdot2^{j}, |y| \geq 3\cdot2^{k}, |z| < 3\cdot2^{\ell}$. 
By the cancellation condition of $\phi^{(1)}$ and $\phi^{(2)}$, we obtain that
\begin{align*}
&{K}\ast\phi_{j,k,\ell}(x, y, z)\\
=& \iiint \big[{K}(x-x_1, y-y_1, z-z_1)
-{K}(x, y-y_1, z-z_1)\\
&-{K}(x-x_1, y, z-z_1)
+{K}(x, y, z-z_1)\big] \phi_j^{(1)}(x_1) \phi_k^{(2)}(y_1)
\phi_\ell^{(3)}(z_1)dx_1dy_1dz_1.
\end{align*}

Then repeating the similar argument to {\bf Case 1}, we get that
\begin{align*}
&|{K}\ast\phi_{j,k,\ell}(x, y, z)| \\
&\lesssim 2^{-jn-km-\ell d}\int_{|x_1| \leq 2^{j}} \int_{|y_1| \leq 2^{k}} \int_{|z_1| \leq 2^{\ell}} 
\frac{|x_1| |y_1|}{(|x|+|y|+|z-z_1|)^{n+m+d+2}}\\
&\quad\times
\phi^{(1)}(2^{-j}x_1) \phi^{(2)}(2^{-k}y_1)
\phi^{(3)}(2^{-\ell}z_1)dx_1dy_1 dz_1\\
&\lesssim \frac{2^{-jn-km-\ell d}}{|2^{-j}x|^{n+1} |2^{-k}y|^{m+1} 
} \\
&\lesssim\frac{2^{-jn}}{(1 + 2^{-j} |x|)^{n + 1}}
\frac{2^{-km}}{(1 + 2^{-k} |y|)^{m + 1}}
\frac{2^{-\ell d}}{(1 + 2^{-\ell} |z|)^{d + 1}},
\end{align*}
where the second inequality follows from the fact that for any $a$ is constant and $k>0$,
$$
\int_{\mathbb R^d} \frac{1}{(|a|+|z|)^{d+k}}dz\lesssim \frac{1}{|a|^k}.
$$

Notice that {\bf Case 3:} $|x| \geq 3\cdot2^{j}, |y|< 3\cdot2^{k}, |z| \geq 3\cdot2^{\ell}$
and
{\bf Case 4:} $|x| < 3\cdot2^{j}, |y| \geq 3\cdot2^{k}, |z| \geq 3\cdot2^{\ell}$ is nearly identical to
{\bf Case 2}, so we omit the proofs.

For {\bf Case 5}, $|x| \geq 3\cdot2^{j}, |y| <3\cdot2^{k}, |z| < 3\cdot2^{\ell}$. 
By applying the cancellation condition of $\phi^{(1)}$, we conclude that
\begin{align*}
&{K}\ast\phi_{j,k,\ell}(x, y, z)\\
=& \iiint \big[{K}(x-x_1, y-y_1, z-z_1)
-{K}(x, y-y_1, z-z_1)\big]\\
&\times\phi_j^{(1)}(x_1) \phi_k^{(2)}(y_1)
\phi_\ell^{(3)}(z_1)dx_1dy_1dz_1.
\end{align*}
By using the mean value theorem and the regularity condition of $K$, we get that
\begin{align*}
&|{K}\ast\phi_{j,k,\ell}(x, y, z)| \\
&\lesssim 2^{-jn-km-\ell d}\int_{|x_1| \leq 2^{j}} \int_{|y_1| \leq 2^{k}} \int_{|z_1| \leq 2^{\ell}} 
\frac{|x_1|}{(|x|+|y-y_1|+|z-z_1|)^{n+m+d+1}}\\
&\quad\times
\phi^{(1)}(2^{-j}x_1) \phi^{(2)}(2^{-k}y_1)
\phi^{(3)}(2^{-\ell}z_1)dx_1dy_1 dz_1.
\end{align*}

Noting that
\begin{align*}
&\int_{\mathbb R^m} \int_{\mathbb R^d} 
\frac{1}{(|x|+|y-y_1|+|z-z_1|)^{n+m+d+1}}dy_1 dz_1\\
&\lesssim \int_{\mathbb R^m}
\frac{1}{(|x|+|y-y_1|)^{n+m+1}}dy_1\\
&\lesssim 
\frac{1}{|x|^{n+1}}.
\end{align*}

Thus, we conclude that
\begin{align*}
&|{K}\ast\phi_{j,k,\ell}(x, y, z)| \\
&\lesssim 2^{-jn-km-\ell d}\int_{|x_1| \leq 2^{j}}
\frac{|x_1|}{|x|^{n+1}}
\phi^{(1)}(2^{-j}x_1) dx_1\\
&\lesssim \frac{2^{-jn-km-\ell d}}{|2^{-j}x|^{n+1}} \\
&\lesssim\frac{2^{-jn}}{(1 + 2^{-j} |x|)^{n + 1}}
\frac{2^{-km}}{(1 + 2^{-k} |y|)^{m + 1}}
\frac{2^{-\ell d}}{(1 + 2^{-\ell} |z|)^{d + 1}}.
\end{align*}

Since that {\bf Case 5} and {\bf Case 6:} $|x| \geq 3\cdot2^{j}, |y| <3\cdot2^{k}, |z| < 3\cdot2^{\ell}$
along with {\bf Case 7:} $|x| \geq 3\cdot2^{j}, |y| <3\cdot2^{k}, |z| < 3\cdot2^{\ell}$
are symmetric, so the proofs of these two cases follows from the one of {\bf Case 5}.

Now we deal with the last case.  {\bf Case 8:} $|x|<3\cdot2^{j}, |y| <3\cdot2^{k}, |z| < 3\cdot2^{\ell}$.
Let $\eta_1 \in C^\infty_0 (\mathbb{R}^n)$ with $0 \leq \eta_1(x) \leq 1$ and $\eta_1(x) = 1$ when $|x| \leq 4$, and $\eta_1(x) = 0$ when $|x| \geq 8$. We can define $\eta_2(y)$ on $\mathbb R^m$ and $\eta_2(z)$ on $\mathbb R^d$ similarly. Then
\begin{align*}
&|K\ast \phi_{j,k,\ell} (x,y,z)|\\
=& 2^{-jn-km-\ell d} \bigg| \int_{\mathbb R^d} \int_{\mathbb{R}^m} \int_{\mathbb{R}^n}
{K}(x_1, y_1,z_1)\phi^{(1)}(2^{-j}(x-x_1)) \phi^{(2)}(2^{-k}(y-y_1))\\
&\quad\times\phi^{(3)}
(2^{-\ell}(z-z_1)) \eta_1(2^{-j} x_1) \eta_2(2^{-k} y_1) \eta_3(2^{-\ell}  z_1)dx_1dy_1 dz_1 \bigg|\\
=:I+II,
\end{align*}
where
\begin{align*}
I=& 2^{-jn-km-\ell d} \bigg| \int_{\mathbb R^d} \int_{\mathbb{R}^m} \int_{\mathbb{R}^n}
{K}(x_1, y_1,z_1)(\phi^{(1)}(2^{-j}(x-x_1))-\phi^{(1)}(2^{-j}x))\\&\quad\times
(\phi^{(2)}(2^{-k}(y-y_1))-\phi^{(2)}(2^{-k}y))(\phi^{(3)}
(2^{-\ell}(z-z_1))-\phi^{(3)}(2^{-\ell}z))\\
&\quad\quad\times  \eta_1(2^{-j} x_1) \eta_2(2^{-k} y_1) \eta_3(2^{-\ell}  z_1)dx_1dy_1 dz_1 \bigg|
\end{align*}
and
\begin{align*}
II=& 2^{-jn-km-\ell d} \bigg| \int_{\mathbb R^d} \int_{\mathbb{R}^m} \int_{\mathbb{R}^n}
{K}(x_1, y_1,z_1)\phi^{(1)}(2^{-j}(x)) \phi^{(2)}(2^{-k}(y))\\
&\quad\times\phi^{(3)}
(2^{-\ell}(z)) \eta_1(2^{-j} x_1) \eta_2(2^{-k} y_1) \eta_3(2^{-\ell}  z_1)dx_1dy_1 dz_1 \bigg|.
\end{align*}

By applying the size condition on $\mathcal{K}$ and the smoothness condition on $\phi$ for $I$,
we conclude that
\begin{align*}
&|K \ast \phi_{j,k,\ell} (x,y,z)|\\
&\lesssim 2^{-jn-km-\ell d} \int_{|z_1|\leq 2^{\ell+3}}\int_{|y_1| \leq 2^{k+3}} \int_{|x_1| \leq 2^{j+3}} 
\frac{|2^{-j}x_1||2^{-k}y_1| 2^{-\ell}z_1|}{(|x_1|+|y_1| + |z_1|)^{n+m+d}}dx_1dy_1dz_1\\
&\lesssim 2^{-jn-km-\ell d}\\
&\lesssim\frac{2^{-jn}}{(1 + 2^{-j} |x|)^{n + 1}}
\frac{2^{-km}}{(1 + 2^{-k} |y|)^{m + 1}}
\frac{2^{-\ell d}}{(1 + 2^{-\ell} |z|)^{d + 1}}.
\end{align*}

Moreover, from the fact that the conditions of $K$ implies that $\widehat{K}$ is bounded,  
we obtain that
\begin{align*}
|K\ast \varphi_{j,k,\ell} (x,y,z)|
&\lesssim \int_{\mathbb R^d}\int_{\mathbb{R}^m} \int_{\mathbb{R}^n} \widehat{K} (\xi_1, \xi_2, \xi_3) \widehat\eta_1(2^{j} \xi_1) \widehat\eta_2(2^{k} \xi_2) \widehat\eta_3(2^{-\ell} \xi_3) d\xi_1 d\xi_2 d\xi_3\\
&\lesssim 2^{-jn-km-\ell d}\\
&\lesssim\frac{2^{-jn}}{(1 + 2^{-j} |x|)^{n + 1}}
\frac{2^{-km}}{(1 + 2^{-k} |y|)^{m + 1}}
\frac{2^{-\ell d}}{(1 + 2^{-\ell} |z|)^{d + 1}}.
\end{align*}

Combining $|\phi_{j,k,\ell} \ast {K} \ast \phi_{j',k',\ell'} (x, y, z)=|K\ast (\phi_{j,k,\ell}\ast \phi_{j',k',\ell'} (x, y, z))$ with the following classical almost orthogonal estimates: for a large positive integer $M$ we have
\begin{align*}
&\left|\phi_{j,k,\ell} \ast \phi_{j',k',\ell'} (x, y, z) \right|
\lesssim  2^{-|j-j'|M}2^{-|k-k'|M}2^{-|\ell-\ell'|M}\\
&\quad\times\frac{2^{-(j \vee j')n}}{(1 + 2^{-(j \vee j')} |x|)^{n+M}}
\frac{2^{-(k \vee k')m}}{(1 + 2^{-(k \vee k')} |y|)^{m+M}}
\frac{2^{-(\ell \vee \ell')d}}{(1 + 2^{-(\ell \vee \ell')} |z|)^{d+M}},
\end{align*}
we immediately obtain the desired result. Therefore, we prove this lemma.
\end{proof}

Now we recall the discrete Calder\'on reproducing identity on $L^2(\mathbb R^{n+m+d})\cap H^p_w(\mathbb R^n\times\mathbb R^m\times\mathbb R^d)$.
See \cite{HLLL,HC,Ru} for more details.
\begin{lemma}\label{DCI}
Let $w \in A_{\infty}(\mathbb R^n\times\mathbb R^m\times\mathbb R^d)$. Then there exists an operator $h\in L^2(\mathbb R^{n+m+d})\cap H^p_w(\mathbb R^n\times\mathbb R^m\times\mathbb R^d)$ such that 
for a sufficiently large $N\in \mathbb N$,
\[
f(x,y,z) = \sum_{j,k,\ell} \sum_{Q\in \mathcal{R}^{j-N,k-N,\ell-N}} |Q| {\phi}_{j,k,\ell}(x - x_I, y - y_J, z - z_K) \left( \phi_{j,k,\ell} \ast h \right)(x_I, y_J, z_K),
\]
where $0 < p < \infty$, and the series converges in $L^2(\mathbb{R}^{n+m+d})$.
Moreover,
$$
\|f\|_{L^2(\mathbb R^{n+m+d})}\sim \|h\|_{L^2(\mathbb R^{n+m+d})};\quad \|f\|_{H_w^p(\mathbb{R}^n\times \mathbb{R}^m \times \mathbb{R}^d)}
\sim \|h\|_{H_w^p(\mathbb{R}^n \times \mathbb{R}^m \times \mathbb{R}^d)}.
$$
\end{lemma}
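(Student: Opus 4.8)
The plan is to deduce the discrete reproducing formula from the continuous Calderón identity by a Riemann-sum discretization, together with an almost orthogonality argument showing that the resulting error operator has small norm on both $L^2$ and $H^p_w$, and then to invert the main term by a Neumann series. First I would recall the continuous Calderón reproducing formula $f = \sum_{j,k,\ell} \phi_{j,k,\ell} \ast \phi_{j,k,\ell} \ast f$, valid in $L^2(\mathbb{R}^{n+m+d})$ and, through the Littlewood--Paley characterization, compatible with the $H^p_w$-norm. For a large integer $N$ to be chosen, I partition $\mathbb{R}^{n+m+d}$, for each triple $(j,k,\ell)$, into the dyadic rectangles $Q = I\times J\times K \in \mathcal{R}^{j-N,k-N,\ell-N}$ and define the main operator
$$T_N(f)(u) = \sum_{j,k,\ell}\sum_{Q\in\mathcal{R}^{j-N,k-N,\ell-N}} |Q|\,\phi_{j,k,\ell}(u - u_Q)\,(\phi_{j,k,\ell}\ast f)(u_Q),$$
along with the remainder $R_N = \mathrm{Id} - T_N$, so that $R_N(f)(u) = \sum_{j,k,\ell}\sum_Q \int_Q [\phi_{j,k,\ell}(u-v)(\phi_{j,k,\ell}\ast f)(v) - \phi_{j,k,\ell}(u-u_Q)(\phi_{j,k,\ell}\ast f)(u_Q)]\,dv$.

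Next I would estimate the kernel of $R_N$. For $v\in Q$ the displacement $|v - u_Q|$ is comparable to $2^{-N}$ times the scale $2^j,2^k,2^\ell$ in the three respective variable groups; applying the mean value theorem, using the smoothness and compact support of the $\phi^{(i)}$ and the regularity of $\phi_{j,k,\ell}\ast\phi_{j',k',\ell'}$ (the classical product-type almost orthogonality estimate already invoked in the proof of Lemma \ref{ortho3}), yields an almost orthogonality estimate for $\phi_{j',k',\ell'}\ast R_N\phi_{j,k,\ell}$ carrying an extra decay factor $C\,2^{-\varepsilon N}$ for some $\varepsilon>0$. I then bound the sums over $I',J',K'$ by a cube-to-maximal-function inequality of the same type as Lemma \ref{max}, now in the three-parameter product setting, raise to the square, sum in $j,k,\ell$, and invoke the Fefferman--Stein vector-valued inequality of Proposition \ref{FS}. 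Here one uses that $w\in A_\infty$: choose $q_w$ with $w\in A_{q_w}$ and pick the auxiliary exponent $r$ small enough (and larger than $\frac{3n}{3n+1}\vee\frac{3m}{3m+1}\vee\frac{3d}{3d+1}$ when $p\le 1$, or simply $r=1$ when $p>1$) so that $\max(2,p)/r > q_w$. This produces $\|R_N\|_{L^2\to L^2}\le C\,2^{-\varepsilon N}$ and, through the square function characterization, $\|R_N\|_{H^p_w\to H^p_w}\le C\,2^{-\varepsilon N}$.

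Finally, fixing $N$ so large that $\|R_N\|<1$ on both $L^2(\mathbb{R}^{n+m+d})$ and $H^p_w(\mathbb{R}^n\times\mathbb{R}^m\times\mathbb{R}^d)$, the Neumann series $T_N^{-1}=\sum_{i\ge 0}R_N^i$ converges and is bounded on both spaces (for $p<1$ one first passes to an equivalent $p$-norm on $H^p_w$ so that the geometric series still converges); setting $h = T_N^{-1}(f)$ gives $f = T_N(h)$, which is exactly the asserted formula, with convergence in $L^2$, while the two-sided bounds $\|f\|\sim\|h\|$ on each space follow from the boundedness of $T_N$ and $T_N^{-1}$, and $h\in L^2\cap H^p_w$ is inherited from $f$ since $T_N^{-1}$ preserves both spaces. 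I expect the main obstacle to be the $H^p_w$ bound on $R_N$ with geometric decay in $N$: this is where one must simultaneously handle the almost-orthogonality estimates, the cube-to-maximal-function passage, the weighted Littlewood--Paley characterization, and the Fefferman--Stein inequality, all while tracking the dependence on the $A_\infty$ constant of $w$ and the summability over all scales $(j,k,\ell)$ and all dyadic rectangles.
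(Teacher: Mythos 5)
Your overall plan --- discretizing the continuous Calder\'on reproducing formula into the Riemann-sum operator $T_N$, showing the remainder $R_N := \mathrm{Id} - T_N$ has operator norm $O(2^{-\varepsilon N})$ on both $L^2$ and $H^p_w$ via almost orthogonality, a cube-to-maximal-function estimate, and the Fefferman--Stein vector-valued inequality, and then setting $h = T_N^{-1}f$ via a Neumann series --- is exactly the route used in the references the paper cites for this lemma, and it is the right one.

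There is, however, a genuine gap in your handling of the auxiliary exponent $r$, and it matters for the full range of $(p,w)$ that the lemma claims. You fix a lower bound $r > \frac{3n}{3n+1}\vee\frac{3m}{3m+1}\vee\frac{3d}{3d+1}$ (incidentally a flag-type constant; the three-parameter product maximal-function estimate with kernel decay exponents $n+1, m+1, d+1$ would instead require $r > \frac{n}{n+1}\vee\frac{m}{m+1}\vee\frac{d}{d+1}$). But the lemma is asserted for \emph{every} $w\in A_\infty$ and \emph{every} $0<p<\infty$. Writing $w\in A_{q_0}$, your Fefferman--Stein step at exponents $(p/r,\,2/r)$ requires $w\in A_{p/r}$, hence $r < p/q_0$ and also $r<2$; when $p$ is small or $q_0$ is large this is incompatible with any fixed positive lower bound on $r$, and your fallback $r=1$ for $p>1$ fails for the same reason. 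The missing ingredient is that the $\phi^{(i)}$ defined at the start of Section~4 carry moment conditions up to an arbitrarily large order $M$, so the almost-orthogonality kernel may be taken to decay like $(1+|\cdot|)^{-(n+M)}$, $(1+|\cdot|)^{-(m+M)}$, $(1+|\cdot|)^{-(d+M)}$, and the corresponding cube-to-maximal-function inequality then admits any $r > \frac{n}{n+M}\vee\frac{m}{m+M}\vee\frac{d}{d+M}$, which can be pushed arbitrarily close to $0$. Given $p$ and $w\in A_{q_0}$, one must first choose $M$ large enough that an admissible $r<\min(p,2)/q_0$ exists; with that $r$, your argument goes through as planned.
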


The following lemma also plays a key role in the proof of Theorem \ref{1p-3},  which is a slight modification to \cite[Lemma 2.3]{Ru}.
Also see \cite{DH,FJ,HLW,Tan} for more details. 

\begin{lemma}\label{max}  
Given any integers \( j, k, \ell, j', k', \ell' \), let \( Q \in \mathcal{R}^{j,k,\ell} \) and 
\( Q' = I' \times J' \times K' \in \mathcal{R}^{j',k',\ell} \). Let \( \{a_{Q'}\} \) be any given sequence and 
let \( x_{Q'}^* = (x_{I'}^{*}, x_{J'}^{*}, x_{K'}^{*}) \) be any point in \( Q' \). 
Then for any \(u^* = (u^{*}_{1}, u^{*}_{2}, u^{*}_{3})\in Q \), 
\( v^* = (v^{*}_{1}, v^{*}_2, v^{*}_3) \in Q \), we have
\begin{align*}
&\sum_{Q' \in \mathcal{R}^{j',k',\ell}} \frac{2^{-(j \vee j')n} }
{\left(1+ 2^{-(j \vee j')}|u^{*}_1 -x_{I'}^{*}| \right)^{n+1}}  
\frac{ 2^{-(k \vee k')m}}
{\left(1 + 2^{-(k \vee k')}|u^{*}_2 - x_{J'}^{*} \right)^{m+1} }\\
&\qquad\qquad\qquad\qquad\times
\frac{2^{-(\ell \vee \ell')d}}
{\left(1 + 2^{-(\ell \vee \ell')}|u^{*}_3 - x_{K'}^{*}| \right)^{d+1} }
 |Q'| |a_{Q'}|\\
&\lesssim 2^{[n(j\vee j'-j')+m(k\vee k'-k')+d(\ell\vee \ell'-\ell')](\frac{1}{r}-1)}
\left\{ M_s \left(\sum_{Q' \in \mathcal{R}^{j',k',\ell'}} |a_{Q'}|^r \chi_{Q'} (v^*)\right)^{1/r}\right\},
\end{align*}
where \( (\frac{n}{n+1}\vee \frac{m}{m+1}\vee \frac{d}{d+1})< r \leq 1 \).
\end{lemma}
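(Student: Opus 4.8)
The plan is to follow the scheme of \cite[Lemma 2.3]{Ru}, exploiting two structural features of the left-hand side: the kernel factors as a product over the three groups of variables, and every cube $Q'\in\mathcal{R}^{j',k',\ell'}$ occurring in the sum has the \emph{same} volume $|Q'|=2^{j'n+k'm+\ell'd}$. Writing $g:=\sum_{Q'\in\mathcal{R}^{j',k',\ell'}}|a_{Q'}|^r\chi_{Q'}$, the goal is to dominate the sum by a constant multiple of $2^{[n(j\vee j'-j')+m(k\vee k'-k')+d(\ell\vee\ell'-\ell')](1/r-1)}\big(M_sg(v^*)\big)^{1/r}$.

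First I would organize the summation over $Q'=I'\times J'\times K'$ into dyadic annuli. For $s=(s_1,s_2,s_3)\in\mathbb{Z}_{\ge0}^3$, let $\mathcal{A}_s$ consist of those $Q'$ for which $1+2^{-(j\vee j')}|u_1^*-x_{I'}^*|\sim 2^{s_1}$, $1+2^{-(k\vee k')}|u_2^*-x_{J'}^*|\sim 2^{s_2}$ and $1+2^{-(\ell\vee\ell')}|u_3^*-x_{K'}^*|\sim 2^{s_3}$; the families $\{\mathcal{A}_s\}_s$ partition $\mathcal{R}^{j',k',\ell'}$. On $\mathcal{A}_s$ the kernel is comparable to
\[
2^{-(j\vee j')n-(k\vee k')m-(\ell\vee\ell')d}\,2^{-s_1(n+1)-s_2(m+1)-s_3(d+1)}.
\]
Since $2^{j'}\le 2^{j\vee j'}$, $2^{k'}\le 2^{k\vee k'}$ and $2^{\ell'}\le 2^{\ell\vee\ell'}$, every $Q'\in\mathcal{A}_s$ is contained in a box $\widetilde{R}_s$ of side lengths $\sim 2^{j\vee j'+s_1},\,2^{k\vee k'+s_2},\,2^{\ell\vee\ell'+s_3}$ centred near $u^*$; and because $u^*,v^*\in Q$ with $Q$ of side lengths $2^j\le 2^{j\vee j'}$, $2^k\le 2^{k\vee k'}$, $2^\ell\le 2^{\ell\vee\ell'}$, the point $v^*$ lies in a fixed dilate of $\widetilde{R}_s$, which we may harmlessly enlarge $\widetilde{R}_s$ to contain.

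Next, using $0<r\le1$, the constancy of $|Q'|$ and the disjointness of the cubes in $\mathcal{R}^{j',k',\ell'}$, I would estimate the mass on each annulus by
\[
\sum_{Q'\in\mathcal{A}_s}|Q'|\,|a_{Q'}|
\le |Q'|^{1-1/r}\Big(\sum_{Q'\in\mathcal{A}_s}|Q'|\,|a_{Q'}|^r\Big)^{1/r}
=|Q'|^{1-1/r}\Big(\int_{\widetilde{R}_s}\sum_{Q'\in\mathcal{A}_s}|a_{Q'}|^r\chi_{Q'}\Big)^{1/r}
\le|Q'|^{1-1/r}\big(|\widetilde{R}_s|\,M_sg(v^*)\big)^{1/r},
\]
the last step using $\sum_{Q'\in\mathcal{A}_s}|a_{Q'}|^r\chi_{Q'}\le g$ and $v^*\in\widetilde{R}_s$. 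Substituting $|Q'|=2^{j'n+k'm+\ell'd}$ and $|\widetilde{R}_s|\sim 2^{(j\vee j'+s_1)n+(k\vee k'+s_2)m+(\ell\vee\ell'+s_3)d}$, multiplying by the kernel bound on $\mathcal{A}_s$ and summing over $s$, the $s$-independent exponents collapse exactly to $[n(j\vee j'-j')+m(k\vee k'-k')+d(\ell\vee\ell'-\ell')](1/r-1)$, while the residual sum is
\[
\sum_{s_1\ge0}2^{s_1(n/r-n-1)}\sum_{s_2\ge0}2^{s_2(m/r-m-1)}\sum_{s_3\ge0}2^{s_3(d/r-d-1)},
\]
which is finite precisely because $r>\frac{n}{n+1}\vee\frac{m}{m+1}\vee\frac{d}{d+1}$. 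Assembling these estimates yields the claim.

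I expect the main obstacle to be the geometric bookkeeping of the second paragraph rather than any analytic difficulty: one must check that the annuli $\mathcal{A}_s$ genuinely partition the index set, that the cubes of a fixed annulus fit inside a box of the asserted dimensions (this is exactly where $2^{j'}\le 2^{j\vee j'}$ and its analogues enter), and that passing from the centre $u^*$ to the evaluation point $v^*$ of $M_sg$ costs only a bounded dilation of $\widetilde{R}_s$. Once this setup is in place, the rest is the elementary exponent arithmetic and the geometric-series summation sketched above, and the corresponding two-parameter statement on $\mathbb{R}^n\times\mathbb{R}^m$ follows verbatim by dropping the third group of variables.
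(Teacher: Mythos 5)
Your proposal is correct and is the standard dyadic‐annulus argument that underlies \cite[Lemma 2.3]{Ru} and its analogues (e.g.\ the original two‐parameter version going back to Frazier--Jawerth and the flag version in \cite{CHW}); the paper itself does not write out a proof but simply refers to \cite{Ru}. All the key steps check out: the annuli $\mathcal{A}_s$ do partition $\mathcal{R}^{j',k',\ell'}$; each $Q'\in\mathcal{A}_s$ has side lengths $2^{j'}\le 2^{j\vee j'}$, etc., so it sits inside a box $\widetilde R_s$ of sides $\sim 2^{j\vee j'+s_1}\times 2^{k\vee k'+s_2}\times 2^{\ell\vee\ell'+s_3}$, and since $u^*,v^*\in Q$ differ by at most $2^j\le 2^{j\vee j'}$ in the first slot (and analogously in the other two), $v^*$ indeed lies in a bounded dilate of $\widetilde R_s$; the $r$-trick $\sum_{Q'}|Q'||a_{Q'}|=|Q'|^{1-1/r}\sum_{Q'}(|Q'||a_{Q'}|^r)^{1/r}\le |Q'|^{1-1/r}\bigl(\sum_{Q'}|Q'||a_{Q'}|^r\bigr)^{1/r}$ is a valid $\ell^1\hookrightarrow\ell^{1/r}$ step using that $|Q'|$ is constant on $\mathcal R^{j',k',\ell'}$; and the exponent arithmetic gives $n(j\vee j'-j')(1/r-1)$ (and the $m$-, $d$-analogues) as the $s$-independent factor, with the residual geometric series in $s_1,s_2,s_3$ converging exactly when $r>\frac{n}{n+1}\vee\frac{m}{m+1}\vee\frac{d}{d+1}$.
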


\noindent {\bf Proof of Theorem \ref{1p-3}}.\quad By applying Lemma \ref{DCI}, for $f\in L^2(\mathbb R^{n+m+d})\cap H^p_w(\mathbb R^n\times\mathbb R^m\times\mathbb R^d)$ we conclude that
$$ 
\begin{aligned}
&\left\|T(f)\right\|_{H^p_w(\mathbb R^n\times\mathbb R^m\times \mathbb R^d)}\\
&=\left\|\left\{\sum_{j, k, \ell \in \mathbb{Z}} \sum_{Q \in \mathcal{R}^{j-N, k-N,\ell-N}}\left|\left(\phi_{j, k,\ell} *K * f\right)\left(x_Q\right)\right|^2 \chi_Q\right\}^{1 / 2}\right\|_{L_w^p\left(\mathbb{R}^{n+m+d}\right)} \\
&= \Bigg\|\Bigg\{\sum_{j, k, \ell \in \mathbb{Z}} \sum_{Q \in \mathcal{R}^{j-N, k-N,\ell-N}}\Bigg|\phi_{j, k,\ell}\ast {K}\ast\Bigg(\sum_{j^{\prime}, k^{\prime, \ell^{\prime}} \in \mathbb{Z}} \sum_{Q^{\prime} \in \mathcal{R}^{j'-N, k'-N,\ell'-N}}\left|Q^{\prime}\right| \phi_{j^{\prime}, k^{\prime}, \ell^{\prime}}\ast
f\left(x_{R^{\prime}}\right)\\
&\qquad\qquad\qquad \times \phi_{j^{\prime}, k^{\prime}, \ell^{\prime}}(\cdot-x_{Q'})\Bigg)(x_Q)\Bigg|^2\chi_{Q}\Bigg\}^{1/2}\Bigg\|_{L^p_w(\mathbb R^{n+m+d})}\\
&= \Bigg\|\Bigg\{\sum_{j, k, \ell \in \mathbb{Z}} \sum_{Q \in \mathcal{R}^{j-N, k-N,\ell-N}}\Bigg|
\sum_{j^{\prime}, k^{\prime, \ell^{\prime}} \in \mathbb{Z}} \sum_{Q^{\prime} \in \mathcal{R}^{j'-N, k'-N,\ell'-N}}\left|Q^{\prime}\right| \phi_{j^{\prime}, k^{\prime}, \ell^{\prime}}\ast f\left(x_{R^{\prime}}\right)\\
&\qquad\qquad\qquad \times (\phi_{j, k,\ell}\ast {K}\ast \phi_{j^{\prime}, k^{\prime}, \ell^{\prime}})(x_Q-x_{Q'})\Bigg|^2\chi_{Q}\Bigg\}^{1/2}\Bigg\|_{L^p_w(\mathbb R^{n+m+d})}.
\end{aligned}
$$

Observe that $\tilde \gamma=\frac{n}{n+1}\vee\frac{m}{m+1}\vee \frac{d}{d+1}$.
 If $w\in A_{p/\tilde\gamma}(\mathbb R^n\times \mathbb R^m)$, then there exist $\tilde\gamma<r\le1$ such that
 $w\in A_{p/r}(\mathbb R^n\times \mathbb R^m)$.
Thus, using Lemma \ref{ortho3}, Lemma \ref{max} and Proposition \ref{FS}, we find that
\begin{align*}
&\|T f\|_{H^p_w\left(\mathbb{R}^n\times\mathbb R^m\times\mathbb R^d\right)}\\
\lesssim& \left\|\left\{\sum_{j^{\prime}, k^{\prime}, \ell^{\prime} \in \mathbb{Z}}\left[\mathcal{M}_{s}
\left(\sum_{Q^{\prime} \in \mathcal{R}^{j'-N, k'-N,\ell'-N}}
\left|\left(\phi_{j^{\prime}, k^{\prime},\ell^{\prime}}\ast f\right)\left(x_{Q^{\prime}}\right)\right|^2 \chi_{Q^{\prime}}\right)^{r/2}\right]^{2/r}\right\}^{1/ 2}\right\|_{L_w^p\left(\mathbb{R}^{n+m+d}\right)}\\
\lesssim& \left\|\left\{\sum_{j^{\prime}, k^{\prime}, \ell^{\prime} \in \mathbb{Z}}
\sum_{Q^{\prime} \in \mathcal{R}^{j'-N, k'-N,\ell'-N}}
\left|\left(\phi_{j^{\prime}, k^{\prime},\ell^{\prime}}\ast f\right)\left(x_{Q^{\prime}}\right)\right|^2 \chi_{Q^{\prime}}\right\}^{1/ 2}\right\|_{L_w^p\left(\mathbb{R}^{n+m+d}\right)}\\
\sim& \|f\|_{H^p_w\left(\mathbb{R}^n\times\mathbb R^m\times\mathbb R^d\right)}.
\end{align*}

Thus, applying the fact that $L^2(\mathbb R^{n+m+d})\cap H^p_w(\mathbb R^n\times\mathbb R^m\times\mathbb R^d)$ is dense in $H^p_w(\mathbb R^n\times\mathbb R^m\times\mathbb R^d)$, we have completed the proof of Theorem \ref{1p-3}.
\hfill $\square$

\section{Proofs of Theorems \ref{fp} and \ref{fp-3}}
The proof of Theorem \ref{fp} is essentially the same as the proof of Theorem \ref{fp-3}.
Next we only need to give the proof of Theorem \ref{fp-3}. Before we prove it, first we recall the three-parameter singular integral operators. The three-parameter singular integral kernel $K$ is a distribution on
$\mathbb R^n\times \mathbb R^m\times \mathbb R^d$ which coincides with a smooth function away from the coordinate subspace $x = 0$, $y = 0$ and $z = 0$ and fulfills

\begin{itemize}
    \item[(i)] (differential inequalities) for each multi-index $\alpha = (\alpha_1, \dots, \alpha_n)$, $\beta = (\beta_1, \dots, \beta_m)$, $\gamma = (\gamma_1, \dots, \gamma_d)$, there exists a constant $C_{\alpha, \beta, \gamma} > 0$ such that
    \[
    \left| \partial_x^\alpha \partial_y^\beta \partial_z^\gamma \mathcal{K}(x, y, z) \right| \leq C_{\alpha, \beta, \gamma} |x|^{-n-|\alpha|} |y|^{-m-|\beta|} |z|^{-d-|\gamma|},
    \]
    \item[(ii)] (cancellation conditions) for every normalized bump function $\varphi_1$ on $\mathbb{R}^n$, $\varphi_2$ on $\mathbb{R}^m$ and $\varphi_3$ on $\mathbb{R}^d$ and for any $R_1, R_2, R_3 > 0$,
        \[
    \left| \int_{\mathbb{R}^d} \partial_x^\alpha \partial_y^\beta K(x, y, z)\varphi_3(R_3 z) \,dz \right| \leq C |x|^{-n-|\alpha|}|y|^{-m-|\beta|},
    \]
      \[
    \left| \int_{\mathbb{R}^m} \partial_x^\alpha \partial_z^\gamma K(x, y, z)\varphi_2(R_2y) \,dy \right| \leq C |x|^{-n-|\alpha|}|z|^{-d-|\gamma|},
    \]
    \[
    \left| \int_{\mathbb{R}^n} \partial_y^\beta \partial_z^\gamma K(x, y, z) \varphi_1(R_1 x)\, dx\right| \leq C 
    |y|^{-m-|\beta|}|z|^{-d-|\gamma|},
    \]
    \[
    \left| \int_{\mathbb{R}^m \times \mathbb{R}^d} \partial_x^\alpha K(x, y, z) \varphi_2(R_2 y) \varphi_3(R_3 z) \, dy \, dz \right| \leq C |x|^{-n-|\alpha|},
    \]
    \[
    \left| \int_{\mathbb{R}^n \times \mathbb{R}^d} \partial_y^\beta K(x, y, z) \varphi_1(R_1 x) \varphi_3(R_3 z) \, dx \, dz \right| \leq C |y|^{-m-|\beta|},
    \]
    \[
    \left| \int_{\mathbb{R}^n \times \mathbb{R}^m} \partial_z^\gamma K(x, y, z) \varphi_1(R_1 x) \varphi_2(R_2 y) \, dx \, dy \right| \leq C |z|^{-d-|\gamma|},
    \]
    \[
    \left| \int_{\mathbb{R}^n \times \mathbb{R}^m \times \mathbb{R}^d} K(x, y, z) \varphi_1(R_1 x) \varphi_2(R_2 y) \varphi_3(R_3 z) \, dx \, dy \, dz \right| \leq C.
    \]
\end{itemize}

Then the operator $T$ defined by $T(f)(x,y,z)=p.\,v.\, K\ast f(x,y,z)$ is said to three-parameter
singular integral operator. It was demonstrated in \cite{NRS} that flag kernels constitute a subclass of product kernels. Consequently, the boundedness of flag singular integrals on weighted product Hardy spaces follows as a corollary of the boundedness of three-parameter singular integral operators on these spaces. To end it, we only need to obtain the following three-parameter almost orthogonal
estimates for the three-parameter singular integral kernels.
The rest of the proof of Theorem \ref{fp-3} is identical to Section 4.

\begin{proposition}\label{ortho4} 
Suppose that $\phi$ is defined in Section 4 and $K$ is a three-parameter singular integral kernel as above. Then for any given positive integers $L$ and $M$ we have
\begin{align*}
&\left|\phi_{j,k,\ell} \ast {K} \ast \phi_{j',k',\ell'} (x, y, z) \right|\lesssim  2^{-|j-j'|L}2^{-|k-k'|L}2^{-|\ell-\ell'|L}
\frac{2^{-(j \vee j')n}}{(1 + 2^{-(j \vee j')} |x|)^{n+M}}\\
&\qquad\qquad\times\frac{2^{-(k \vee k')m}}{(1 + 2^{-(k \vee k')} |y|)^{m+M}}
\frac{2^{-(\ell \vee \ell')d}}{(1 + 2^{-(\ell \vee \ell')} |z|)^{d+M}},
\end{align*}
for all $(x,y,z)\in \mathbb R^n\times\mathbb R^m\times \mathbb R^d$.
\end{proposition}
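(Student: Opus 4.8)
The plan is to reduce everything to an almost-orthogonality estimate for a genuine three-parameter singular integral kernel $K$: since a flag kernel is a product kernel (the result of \cite{NRS} quoted above), once Proposition \ref{ortho4} is available, Theorem \ref{fp-3} follows by repeating the proof of Theorem \ref{1p-3} word for word, with Lemma \ref{ortho3} replaced by Proposition \ref{ortho4}. To attack the proposition itself, I would first observe that, since convolution is commutative and each $\phi_{j,k,\ell}$ is a tensor product,
\[
\phi_{j,k,\ell}\ast K\ast\phi_{j',k',\ell'}=K\ast\bigl(\Phi^{(1)}\otimes\Phi^{(2)}\otimes\Phi^{(3)}\bigr),\quad\Phi^{(1)}=\phi^{(1)}_{j}\ast\phi^{(1)}_{j'},\ \Phi^{(2)}=\phi^{(2)}_{k}\ast\phi^{(2)}_{k'},\ \Phi^{(3)}=\phi^{(3)}_{\ell}\ast\phi^{(3)}_{\ell'}.
\]
By the classical one-parameter almost orthogonal estimate (Taylor-expanding the wider factor against the vanishing moments of the narrower one), $\Phi^{(1)}$ is supported in a ball of radius $\lesssim 2^{j\vee j'}$, still has vanishing moments up to the order imposed on $\phi^{(1)}$, and satisfies $\|\partial^\alpha\Phi^{(1)}\|_\infty\lesssim 2^{-|j-j'|(M+1)}2^{-(j\vee j')(n+|\alpha|)}$; likewise for $\Phi^{(2)},\Phi^{(3)}$.

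Thus it suffices to establish a ``molecule'' estimate: writing $(n_1,n_2,n_3)=(n,m,d)$ and $(u_1,u_2,u_3)=(x,y,z)$, if each $\Psi_i\in C_c^\infty(\mathbb R^{n_i})$ is supported in a ball of radius $s_i$, has vanishing moments up to order $M$, and obeys $\|\partial^\alpha\Psi_i\|_\infty\le A_i\,s_i^{-n_i-|\alpha|}$ for $|\alpha|\le M$, then
\[
\bigl|K\ast(\Psi_1\otimes\Psi_2\otimes\Psi_3)(x,y,z)\bigr|\lesssim A_1A_2A_3\prod_{i=1}^{3}\frac{s_i^{-n_i}}{\bigl(1+s_i^{-1}|u_i|\bigr)^{n_i+M}} .
\]
Indeed, substituting $A_1=2^{-|j-j'|(M+1)}$, $A_2=2^{-|k-k'|(M+1)}$, $A_3=2^{-|\ell-\ell'|(M+1)}$ and $s_1=2^{j\vee j'}$, $s_2=2^{k\vee k'}$, $s_3=2^{\ell\vee\ell'}$ yields Proposition \ref{ortho4}, with $L$ and $M$ arbitrary up to the number of vanishing moments of the $\phi^{(i)}$, which is taken as large as we please. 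By the dilation invariance of the product-kernel conditions we may further assume $s_1=s_2=s_3=1$.

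To prove the molecule estimate with $s_i=1$, I would split $\mathbb R^{n}\times\mathbb R^{m}\times\mathbb R^{d}$ into the eight regions according to whether each of $|x|,|y|,|z|$ is $\le c$ or $>c$ for a fixed $c$. In a variable that is ``large'', say $|x|>c$, Taylor-expand $K(x-x_1,y-y_1,z-z_1)$ in $x_1$ to order $M$ about $x_1=0$: the polynomial part dies against the moments of $\Psi_1$, and the remainder produces $\partial_x^\alpha K(x-\theta x_1,\cdot,\cdot)$ with $|\alpha|=M+1$, $|x-\theta x_1|\sim|x|$, together with the harmless factor $\int|x_1|^{M+1}|\Psi_1(x_1)|\,dx_1\lesssim A_1$. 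Doing this simultaneously in every large variable, one is left with an integral over the small variables of a derivative of $K$ against the remaining $\Psi_i$'s, which after translating are finite sums of $O(1)$ normalized bump functions of scale $\sim1$; the matching partial cancellation condition of the product kernel then supplies the decay $|x|^{-n-M-1}$ (resp.\ $|y|^{-m-M-1}$, $|z|^{-d-M-1}$) in the large variables, while $(1+|u_i|)^{-n_i-M}\sim1$ in the small ones. The conditions used, in order of how many variables are small: the differential inequality for $\partial_x^\alpha\partial_y^\beta\partial_z^\gamma K$; the bound on $\int_{\mathbb R^d}\partial_x^\alpha\partial_y^\beta K\,\varphi_3(R_3z)\,dz$ (and permutations); the bound on $\int_{\mathbb R^m}\partial_x^\alpha\partial_z^\gamma K\,\varphi_2(R_2y)\,dy$ (and permutations); and, when no variable is large, the full cancellation condition $|\langle K,\varphi_1\otimes\varphi_2\otimes\varphi_3\rangle|\lesssim1$. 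Summing the eight contributions gives the estimate.

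The main obstacle is precisely the fully local region $|x|,|y|,|z|\lesssim1$, where the singularity of $K$ is genuinely felt and the formal manipulations above are illegitimate; there one has to argue as in Case 8 of the proof of Lemma \ref{ortho3}, inserting smooth unit-scale truncations $\eta_i$, splitting off the piece in which each $\Psi_i$ is replaced by its value at its center, and invoking both the size/cancellation conditions of $K$ and the boundedness of $\widehat K$. A secondary, purely bookkeeping, point is to check that after the translations $x_1\mapsto x-x_1$ etc.\ the functions against which $K$ is tested in the small variables really decompose into $O(1)$ normalized bump functions at scale $\sim1$, with constants independent of $(x,y,z)$, so that the product-kernel cancellation conditions can be applied uniformly.
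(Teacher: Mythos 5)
Your argument is correct, but it takes a genuinely different route from the paper's. The paper iterates the classical one-variable almost orthogonality estimate, slot by slot: it first smooths $K$ in the $x$-variable alone to form $K_{2,3}(y_2,z_2)=\iint\phi_j^{(1)}(x-x_1)K(x_2,y_2,z_2)\phi_{j'}^{(1)}(x_1-x_2)\,dx_1dx_2$, applies the one-parameter estimate pointwise in $(y_2,z_2)$ (since $x_2\mapsto K(x_2,y_2,z_2)$ is a one-parameter kernel for $y_2,z_2\neq 0$), then argues --- citing an analogue of \cite[Lemma 3.3]{HC} --- that $K_{2,3}$ is again a two-parameter product kernel in $(y_2,z_2)$ whose implied constants carry the $x$-decay, and repeats the procedure in $y$ and finally in $z$. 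That is short, but it hinges on the non-obvious intermediate claim that a single-slot smoothing of a product kernel is again a product kernel in the remaining slots. You instead fold both smoothing operators into one molecule $\Phi^{(1)}\otimes\Phi^{(2)}\otimes\Phi^{(3)}$, normalize to unit scale by the dilation invariance of the product-kernel axioms, and run the eight-region large/small decomposition, Taylor-expanding in the large variables against the moments of the $\Phi^{(i)}$ and invoking the matching partial cancellation conditions of $K$ in the small ones --- in effect a three-parameter transplant of the paper's own proof of Lemma~\ref{ortho3}. Your route is longer but more self-contained, staying at the level of the size and cancellation hypotheses on $K$; the paper's is more compact but imports structural stability of product kernels under partial smoothing. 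The two loose ends you flag (the fully local region $|x|,|y|,|z|\lesssim 1$, and checking that translated unit-scale bumps reparametrize as normalized bumps so the cancellation conditions apply uniformly) are genuine but routine, and are handled exactly as in Case 8 of the proof of Lemma~\ref{ortho3}.
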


\begin{proof}
First we recall the classical almost orthogonal estimates for one-parameter singular integral kernel:
Let $\mathcal K$ be a one-parameter inhomogeneous singular integral kernel on $\mathbb R^n$ and
$\phi$ is a Schwartz functions supported in the unit balls centered at the origin fulfilling the sufficiently high moment conditions. Then for any fixed positive integers $L$ and $M$ we have
\begin{align}\label{ortho5}
\left|\phi_{j} \ast \mathcal{K} \ast \phi_{j'} (x) \right|\lesssim  2^{-|j-j'|L}\frac{2^{-(j \vee j')n}}{(1 + 2^{-(j \vee j')} |x|)^{n+M}},
\end{align}
for all $x\in \mathbb R^n$.
We write 
\begin{align*}
&\phi_{j,k,\ell} \ast {K} \ast \phi_{j',k',\ell'} (x, y, z)\\
&=\iint\iint\iint \phi_j^{(1)}(x-x_1)\phi_k^{(2)}(y-y_1)
\phi_\ell^{(3)}(z-z_1)K(x_2,y_2,z_2)\\
&\quad\quad\times\phi_{j'}^{(1)}(x_1-x_2)\phi_{k'}^{(2)}(y_1-y_2)
\phi_{\ell'}^{(3)}(z_1-z_2)dx_1dx_2dy_1dy_2dz_1dz_2.
\end{align*}
Denote
\begin{align*}
K_{2,3}(y_2,z_2)
&=\iint \phi_j^{(1)}(x-x_1)K(x_2,y_2,z_2)\phi_{j'}^{(1)}(x_1-x_2)dx_1dx_2\\
&=:\iint \phi_j^{(1)}(x-x_1)\tilde K(y_2,z_2)(x_2)\phi_{j'}^{(1)}(x_1-x_2)dx_1dx_2.
\end{align*}
For fixed $y_2$ and $y_3$, we can find that $\tilde K(y_2,z_2)(x_2)$
is a one-parameter singular integral kernel.
Therefore, applying (\ref{ortho5}) yields that
\begin{align*}
\left|K_{2,3}(y_2,z_2) \right|\lesssim_{y_2,z_2}  2^{-|j-j'|L}\frac{2^{-(j \vee j')n}}{(1 + 2^{-(j \vee j')} |x|)^{n+M}}.
\end{align*}
Then repeating similar but easier argument to \cite[Lemma 3.3]{HC},
we know that the kernel $K_{2,3}(y_2,z_2)$ is a two-parameter singular integral
kernel satisfying the regularity condition and the cancellation conditions.
Moreover, let
\begin{align*}
K_3(z_2)
&=\iint \phi_k^{(2)}(y-y_1)
K_{2,3}(y_2,z_2)
\phi_{k'}^{(2)}(y_1-y_2)
dy_1dy_2.
\end{align*}
Similarly, we find that $K_3(z_2)$ is s a one-parameter singular kernel with 
\begin{align*}
&|K_3(z_2)|\lesssim_{z_2}  2^{-|j-j'|L}2^{-|k-k'|L}
\frac{2^{-(j \vee j')n}}{(1 + 2^{-(j \vee j')} |x|)^{n+M}}
\frac{2^{-(k \vee k')m}}{(1 + 2^{-(k \vee k')} |y|)^{m+M}}.
\end{align*}
Applying (\ref{ortho5}) again yields that
\begin{align*}
&\iint
\phi_\ell^{(3)}(z-z_1)K_3(z_2)
\phi_{\ell'}^{(3)}(z_1-z_2)dz_1dz_2\\
&\lesssim  2^{-|j-j'|L}2^{-|k-k'|L}2^{-|\ell-\ell'|L}
\frac{2^{-(j \vee j')n}}{(1 + 2^{-(j \vee j')} |x|)^{n+M}}\\
&\qquad\qquad\times\frac{2^{-(k \vee k')m}}{(1 + 2^{-(k \vee k')} |y|)^{m+M}}
\frac{2^{-(\ell \vee \ell')d}}{(1 + 2^{-(\ell \vee \ell')} |z|)^{d+M}},
\end{align*}
which prove this proposition.
\end{proof}

\section{A passage to ball Banach function spaces}
In this section, we will conclude by briefly considering the extension of our main results
to Hardy spaces for ball quasi-Banach function spaces.
In the case of one-parameter function spaces, Sawano et al. \cite{SHYY17} introduced the ball quasi-Banach function space $X$ and established the real-variable theory for the corresponding one-parameter Hardy spaces.
For more details, see \cite{tanzhang,wang2020applications,zhang2021weak} and the reference therein.
On the another hand, the author \cite{Tan1} recently introduced and studied the multi-paramter Hardy space ${H}_X$ associated with the ball quasi-Banach function space $X$. These results can be applied to various concrete examples of ball quasi-Banach function spaces, including product Herz spaces \cite{W}, weighted product Morrey spaces \cite{W1}, and product Orlicz spaces \cite{Tan1}. 

Let $X(\mathbb R^n\times \mathbb R^m)$ be a ball quasi-Banach function space.
The three-parameter Hardy spaces $H_X(\mathbb R^n\times\mathbb R^m\times \mathbb R^d)$ are the set of all distributions $f$
for which the quantity
$$\|f\|_{{H}_X(\mathbb R^n\times\mathbb R^m\times \mathbb R^d)}=\|\mathcal G^d(f)\|_{X(\mathbb R^{n+m+d})}<\infty.$$
Under some mild assumptions, the three-parameter Hardy spaces $H_X(\mathbb R^n\times\mathbb R^m\times \mathbb R^d)$ 
is well defined in terms of the discrete product Littlewood--Paley theory and the Min-Max type inequalities. For more details, we refer to \cite{Tan,Tan1}. 

Therefore, we point out that the results in this paper can be extended to multi-parameter Hardy spaces associated with ball quasi-Banach function spaces ${H}_X(\mathbb R^n\times\mathbb R^m \times \mathbb R^d)$, where $X$ is a ball quasi Banach function space.
Repeating the nearly identical argument to  \cite{BTZ,Tan,Tan1}, we can establish the boundedness of the one-parameter singular integral operator and flag singular integrals on three-parameter Hardy space ${H}_X(\mathbb R^n\times\mathbb R^m \times \mathbb R^d)$ with the help of the weighted Hardy spaces estimates and the refined Rubio de Francia extrapolation. 
The results can then be applied to specific examples of ball quasi-Banach function spaces, such as product Herz spaces and weighted product Morrey spaces. 

\subsection*{Acknowledgements}
The author is supported by the National Natural Science Foundation of China (Grant No.11901309), Natural Science Foundation of Nanjing University of Posts and Telecommunications (Grant No.NY224167) and the Open Project Program of Key Laboratory of Mathematics and Complex System (Grant No. K202502), Beijing Normal University.



\normalsize

\end{document}